%% file: main.tex
\input{preamble}

\newcommand{\modulus}[1]{\left| #1 \right|}
\newcommand{\norm}[1]{\left\lVert #1 \right\rVert}
\newcommand{\integrate}[4]{\displaystyle\int_{#1}^{#2} #3\,#4}
\newcommand{\lip}[0]{\operatorname{Lip}}

\title{Coarse Embeddability Ratios of Banach Spaces}
\author{Avik Das}

\begin{document}

\maketitle

\input{00_abstract}

\tableofcontents

\input{01_introduction}

\input{02_prelim}
\input{03_property_q}
\input{04_tsirelson}
\input{05_metric_cotype}

\input{06_resolution}
\input{99_declaration}

\printbibliography

\end{document}

%% file: preamble.tex
\documentclass[reqno]{amsart}

\usepackage[utf8]{inputenc}
\usepackage[T1]{fontenc}
\usepackage{mathrsfs, comment, url, enumerate, float, marvosym, graphicx}
\usepackage[usenames,dvipsnames]{xcolor} 
\usepackage[normalem]{ulem}
\usepackage[all,arc,2cell]{xy}
\UseAllTwocells
\usepackage{tikz-cd}
\usepackage{amsmath}

\usepackage{hyperref}  
\hypersetup{
    colorlinks=true,
    linkcolor=blue,
    citecolor=blue,
    filecolor=blue,
    urlcolor=blue,
    pdfstartview=FitBH
}
\usepackage[noabbrev, nosort]{cleveref}

\usepackage[
    backend=biber,
    sorting=nyt,
    doi=false,
    url=false,
    isbn=false,
    eprint=false
]{biblatex}
\newtheorem{theorem}{Theorem}[section]
\newtheorem{proposition}[theorem]{Proposition}
\newtheorem{corollary}[theorem]{Corollary}
\newtheorem{lemma}[theorem]{Lemma}

\theoremstyle{definition}
\newtheorem{definition}[theorem]{Definition}
\newtheorem*{remark}{Note}
\newtheorem*{problem}{Problem}

\usepackage{geometry}


%% file: 00_abstract.tex
\begin{abstract}
    Given two Banach spaces $X$ and $E$, one can associate a numerical invariant $\mathcal{CR}(X, E)$, called the coarse embeddability ratio, which provides a criterion for coarse and uniform embeddability. We compute the coarse embeddability ratio for several important classes of Banach spaces, using various tools from the nonlinear theory of Banach spaces. Finally, we find pairs of Banach spaces with arbitrarily large coarse embeddability ratio, resolving an open problem of Rosendal in the negative.
\end{abstract}

%% file: 01_introduction.tex
\section{Introduction}
The traditional theory of Banach spaces was focused solely on their linear aspects. Indeed, the most natural maps to consider between two vector spaces are of course linear maps. However, one could also consider various other types of maps by ``forgetting'' various parts of the Banach space structure, such as the linear structure, the norm structure, or even the metric structure.

One of the first results concerning the nonlinear theory of Banach spaces is the Mazur-Ulam theorem of 1932, which states that any surjective isometry between real Banach spaces is an affine mapping~\cite{mazur1932transformationes}. Thus, linear structure of a Banach space is completely determined by its metric structure (up to choice of which point is distinguished to be $0$).

Several decades later, in 1967,  Kadec proved that all separable infinite-dimensional Banach spaces are homeomorphic ~\cite{kadets1967proof}. In 1982, Toru\'{n}czyk extended Kadec's result to show that any two Banach spaces with the same density character are homeomorphic ~\cite{torunczyk1981characterizing}. Thus, we see the linear structure of a Banach space is left completely undetermined by its topological structure.

These results suggest that somewhere in between keeping only topological structure and keeping the entire metric structure, we could hope for various types of maps which preserve some of the linear properties of Banach spaces. A big result in this direction is due to Ribe, which states that a uniform homeomorphism (see Section 2 for definitions) between Banach spaces preserve finite-dimensional subspaces ~\cite{ribe1978uniformly}.

Ribe's theorem suggests that ``local'' (that is, ``finite-dimensional'') properties of Banach spaces should be preserved under uniform homeomorphism, and thus, should be able to be formulated in purely metric terms. This led to the development of the Ribe program, in which local properties are formulated in metric terms, and then studied in the broader context of metric spaces and uniform embeddings.

In contrast to uniform embeddings, there is also a notion of coarse embedding, introduced by Gromov in \cite{gromov2007metric}. Coarse embeddings preserve ``large-scale structure'', and are thus in opposition to uniform embeddings. Despite this, the following problem has remained open.

\begin{problem}
    Let $X$ and $Y$ be Banach spaces. Are the following equivalent:
    \begin{enumerate}
        \item $X$ coarsely embeds into $Y$.
        \item $X$ uniformly embeds into $Y$.
    \end{enumerate}
\end{problem}

In fact, neither direction of the above problem is known. Several partial and related results have been obtained. In ~\cite{kalton2012uniform}, Kalton provided an example of two separable Banach spaces which are coarsely homeomorphic, but not uniformly homeomorphic. In \cite{randrianarivony2006characterization}, Randrianarivony proved that if $Y$ is a Hilbert space, then the above problem is resolved in the positive.

In ~\cite{rosendal2017equivariant}, Rosendal showed that if $X$ uniformly embeds into $Y$, then there is a simultaneously uniform and coarse embedding of $X$ into $\ell_p(Y)$ for all $1 \leq p < \infty$. Thus, if there is a coarse embedding from $\ell_p(Y)$ into $Y$, then a uniform embedding from $X$ into $Y$ implies a coarse embedding from $X$ into $Y$. In the same paper, Rosendal showed that if $X$ uniformly embeds into closed unit ball $B_Y$ of $Y$, then there is a uniformly continuous coarse embedding of $X$ into $Y \oplus Y$, providing another route to showing uniform embeddability implies coarse embeddability.

Following this, in ~\cite{rosendal2023separationratiosmapsbanach}, Rosendal introduced the notion of a coarse embeddability ratio between Banach spaces, and strengthened his previous result, showing that if $X$ uniformly embeds into $Y$ and $Y \oplus Y$ linearly embeds into $Y$, then $X$ coarsely embeds into $Y$.

On the other hand, far less progress has been made in showing that a coarse embedding implies a uniform embedding. In ~\cite{naor2015uniform}, Naor showed the existence of separable Banach spaces $X$ and $Y$, as well as a net $N$ of $X$ and a Lipschitz function $f:N \to Y$ such that for all uniformly continuous maps $F: X \to Y$,
\[
\sup\limits_{x \in N} \norm{F(x) - f(x)} = \infty.
\]
This suggests that proving the existence of a uniform embedding given a coarse embedding is perhaps less likely than the converse.

Here, we calculate several bounds on different coarse embeddability ratios between classical Banach spaces. In Section 2, we define separation ratios and coarse embeddability ratios and derive some lower bounds on coarse embeddability ratios. In Section 3, we recall Kalton's property $\mathcal{Q}$, and find an upper bound on the coarse embeddability ratio of a space without property $\mathcal{Q}$ into a space with property $\mathcal{Q}$. In Section 4, we establish upper bounds on the coarse embeddability ratios from important classes of spaces into Tsirelson's space, a classical (counter)example in the linear theory. In Section 5, we find an upper bound on the coarse embeddability ratio between $L_p(\mu)$ spaces. Finally, in Section 6, we resolve an open problem of Rosendal.

%% file: 02_prelim.tex
\section{Preliminaries on Separation Ratios}
We start by first defining certain moduli on maps between metric spaces. These moduli present alternative ways to view certain types of maps. Throughout this section, $(M_1, d_1)$ and $(M_2, d_2)$ are metric spaces.

\begin{definition}[Compression and Expansion Moduli]
    Let $\phi: M_1 \to M_2$ be an arbitrary (not necessarily continuous) map. The compression modulus $\kappa_\phi: (0, \infty) \to [0, \infty]$ and expansion modulus $\omega_\phi: (0, \infty) \to [0, \infty]$ of $\phi$ is given by
    \[
    \kappa_\phi(t) = \inf\limits_{d_1(x, y) \geq t} d_2(\phi(x), \phi(y))
    \]
    and
    \[
    \omega_\phi(t) = \sup\limits_{d_1(x, y) \leq t} d_2(\phi(x), \phi(y))
    \]
    for all $t \in (0, \infty)$. Additionally,
    \[
    \kappa(\phi) = \sup\{\kappa_\phi(t): t < \infty\} = \lim\limits_{t \to \infty} \kappa_\phi(t)
    \]
    and
    \[
    \omega(\phi) = \inf\{\omega_\phi(t): t > 0\} = \lim\limits_{t \to 0} \omega_\phi(t).
    \]
\end{definition}

These moduli provide quantitative ways to analyze several properties of maps. For example, a map $\phi:M_1 \to M_2$ is uniformly continuous if and only if $\omega(\phi) = 0$. Additionally for a map $\phi:X \to E$ between Banach spaces, $\omega(\phi) < \infty$ if and only if $\phi$ is coarsely Lipschitz, that is, there exists a constant $C$ such that for all $x, y \in X$,
\begin{equation*}
\label{eq:coarse_lip}
\norm{\phi(x) - \phi(y)} \leq C \norm{x - y} + C.
\end{equation*}
To see why, let $0 < R \leq 1$ such that $\omega_\phi(R) < \infty$, and we see that $C = \frac{\omega_\phi(R)}{R}$ will satisfy the above condition (take intermediate points separated by distance no more than $R$). In Banach spaces, it can be shown (see Lemma 14.1.15~\cite{albiac2016topics}) that $\phi$ is coarsely Lipschitz if and only if there are constants $C \geq 0$ and $R > 0$ such that for all $x, y \in X$ with $\norm{x - y} \geq R$,
\begin{equation*}
\label{eq:unnamed_map_type}
\tag{$\star$}
\norm{\phi(x) - \phi(y)} \leq C\norm{x - y}.
\end{equation*}
Thus, we see that a coarse Lipschitz map provides an upper bound on the expansion of large scale distances.

We remark this only holds for Banach spaces (or, more generally, unbounded metrically convex spaces). In the general metric space case, being coarsely Lipschitz always implies Condition (\ref*{eq:unnamed_map_type}), but the converse does not necessarily hold. Additionally, a map $\phi: M_1 \to M_2$ between metric spaces may satisfy $\omega(\phi) < \infty$ without being coarsely Lipschitz, since we can no longer take intermediate points.

A map $\phi:M_1 \to M_2$ between metric spaces is said to be a \textit{coarse} embedding if $\omega(\phi) < \infty$ and $\kappa(\phi) = \infty$. In this case, we see that $\phi$ is coarsely Lipschitz as before, but we additionally require that large scale distances remain large in the image.

On the other hand, a map $\phi:M_1 \to M_2$ between metric spaces is said to be a \textit{uniform} embedding if $\omega(\phi) = 0$ and $\kappa_\phi(t) > 0$ for all $t > 0$. Note that this requires a map to preserve small scale geometry, since $\omega(\phi) = 0$ enforces the uniform continuity of $\phi$, while $\kappa_\phi(t) > 0$ for all $t > 0$ ensures that distances are not shrunk entirely.

The following definitions provide a partial unification to these two dual notions.

\begin{definition}[Separation Ratio~\cite{rosendal2023separationratiosmapsbanach}]
    The separation ratio $\mathcal{R}(\phi)$ of a map $\phi: M_1 \to M_2$ between metric spaces is given by
    \[
    \mathcal{R}(\phi) = \frac{\kappa(\phi)}{\omega(\phi)},
    \]
    where $\frac{a}{\infty} = \frac{0}{a} = 0$ for all $a \in [0, \infty]$ and $\frac{a}{0} = \frac{\infty}{a} = \frac{\infty}{0} = \infty$ for all $a \in (0, \infty)$.
\end{definition}
\begin{definition}[Coarse Embeddability Ratio~\cite{rosendal2023separationratiosmapsbanach}]
    The coarse embeddability ratio of a metric space $M_1$ into a metric space $M_2$, denoted $\mathcal{CR}(M_1, M_2)$, is given by
    \[
    \mathcal{CR}(M_1, M_2) = \sup\limits_{\phi} \mathcal{R}(\phi),
    \]
    where the supremum is taken over all maps $\phi:M_1 \to M_2$.
\end{definition}

Observe that for a map $\phi:X \to E$ between Banach spaces, $\mathcal{R}(\phi) = \infty$ if and only if $\phi$ is a coarse embedding or uniformly continuous with $\kappa(\phi) > 0$.
In Theorem 3 of ~\cite{rosendal2023separationratiosmapsbanach}, it is shown that if $X$ and $E$ are Banach spaces such that $E \oplus E$ linearly embeds into $E$, then $\mathcal{CR}(X, E) = \infty$ if and only if there is a coarse embedding $\phi:X \to E$. In particular, we see that if $E \oplus E$ linearly embeds into $E$ and $X$ uniformly embeds into $E$, then $X$ coarsely embeds into $E$, showing that the coarse embeddability ratio provides some quantitative value of whether a space can be coarsely embedded into another. An open problem given by Rosendal in ~\cite{rosendal2023separationratiosmapsbanach} is the following.
\begin{problem}
    Let $X$ and $E$ be Banach spaces. Is it true that $\mathcal{CR}(X, E) > 1$ implies $\mathcal{CR}(X, E) = \infty$?
\end{problem}
In Section \ref{sec:resolution}, we resolve this problem in the negative. Thus, there are Banach spaces $X$ and $E$ for which  $X$ is ``partially'' coarsely embeddable into $E$.

The following lemma provides a more concrete view of the separation ratio. It provides the main tool for proving the nonexistence of maps with a certain separation ratio.
\begin{lemma}[Lemma 7~\cite{rosendal2023separationratiosmapsbanach}]
    \label{lemma2.4}
    Let $\phi:M_1 \to M_2$ be a map and let $K > 0$. Then
    \[
    \mathcal{R}(\phi) > K
    \]
    if and only if there exist constants $\Delta, \delta, \Lambda, \lambda > 0$ such that for all $x, y \in M_1$,
    \begin{equation}\label{eqn:lemma7_comp}
        d_1(x, y) \geq \Delta \Rightarrow d_2(\phi(x), \phi(y)) \geq \delta,\tag{$\ast$}
    \end{equation}
    \begin{equation}\label{eqn:lemma7_exp}
        d_1(x, y) \leq \Lambda \Rightarrow d_2(\phi(x), \phi(y)) \leq \lambda\tag{$\dag$}
    \end{equation}
    and $\frac{\delta}{\lambda} > K$.
\end{lemma}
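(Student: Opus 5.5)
Both directions follow by unwinding the definitions of $\kappa(\phi)$ and $\omega(\phi)$ as limits of monotone functions, together with the conventions for $\frac{a}{b}$ when $a$ or $b$ is $0$ or $\infty$. Throughout, note that $\kappa_\phi$ is nondecreasing in $t$ and $\omega_\phi$ is nondecreasing in $t$. Hence $\kappa(\phi)=\sup_t \kappa_\phi(t)$ and $\omega(\phi)=\inf_t \omega_\phi(t)$.

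For the forward direction, assume $\mathcal{R}(\phi) > K$. First I check that this forces $\kappa(\phi) > 0$ and $\omega(\phi) < \infty$. Indeed, if $\kappa(\phi)=0$, or if $\omega(\phi)=\infty$, then the conventions give $\mathcal{R}(\phi)=0$. Next I choose real numbers $\delta,\lambda$ with $0<\delta<\kappa(\phi)$, $\omega(\phi)<\lambda<\infty$ and $\delta/\lambda > K$.
\begin{itemize}
\item If $0<\omega(\phi)$ and $\kappa(\phi)<\infty$, then $\kappa(\phi)/\omega(\phi) > K$, so by continuity of division I can take $\delta$ slightly below $\kappa(\phi)$ and $\lambda$ slightly above $\omega(\phi)$.
\item If $\omega(\phi)=0$, I first fix any $\delta\in(0,\kappa(\phi))$ and then take $\lambda<\delta/K$.
\item If $\kappa(\phi)=\infty$, I first fix $\lambda>\omega(\phi)$ and then take $\delta>K\lambda$.
\end{itemize}
Since $\delta<\sup_t\kappa_\phi(t)$, there is $\Delta$ with $\kappa_\phi(\Delta) > \delta$, which gives \eqref{eqn:lemma7_comp}. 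Since $\lambda>\inf_t\omega_\phi(t)$, there is $\Lambda$ with $\omega_\phi(\Lambda) < \lambda$, which gives \eqref{eqn:lemma7_exp}.

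For the converse, assume constants $\Delta,\delta,\Lambda,\lambda$ as in the statement exist.
\begin{itemize}
\item Condition \eqref{eqn:lemma7_comp} says $\kappa_\phi(\Delta)\ge\delta$, so $\kappa(\phi)\ge\delta>0$.
\item Condition \eqref{eqn:lemma7_exp} says $\omega_\phi(\Lambda)\le\lambda$, so $\omega(\phi)\le\lambda<\infty$.
\end{itemize}
If $\omega(\phi)=0$, the conventions give $\mathcal{R}(\phi)=\infty>K$, since the numerator is positive. If $\kappa(\phi)=\infty$, then $\mathcal{R}(\phi)=\infty>K$, since the denominator is finite. Otherwise $\mathcal{R}(\phi)=\kappa(\phi)/\omega(\phi)\ge\delta/\lambda>K$.

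No step is a genuine obstacle. The only care needed is the bookkeeping of the degenerate cases in the forward direction, where the quotient convention must be handled so that finite $\delta,\lambda$ can be chosen with the strict inequality $\delta/\lambda>K$ preserved.
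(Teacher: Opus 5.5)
Your proof is correct and takes the same route as the paper. The paper's proof is a one-line assertion that $\mathcal{R}(\phi)>K$ is, by definition, equivalent to the existence of $\Delta,\delta,\Lambda,\lambda>0$ with $\kappa_\phi(\Delta)\ge\delta$, $\omega_\phi(\Lambda)\le\lambda$ and $\delta/\lambda>K$, together with the observation that these two inequalities are exactly $(\ast)$ and $(\dagger)$; you carry out that unwinding explicitly, including the degenerate cases $\kappa(\phi)=0$, $\omega(\phi)=\infty$, $\omega(\phi)=0$ and $\kappa(\phi)=\infty$ governed by the quotient conventions, which the paper leaves implicit.
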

\begin{proof}
    By definition, $\mathcal{R}(\phi) > K$ if and only if there exists $\Delta, \delta, \Lambda, \lambda > 0$ such that $\kappa_\phi(\Delta) \geq \delta$, $\omega_\phi(\Lambda) \leq \lambda$, and $\frac{\delta}{\lambda} > K$. Note $\kappa_\phi(\Delta) \geq \delta$ is equivalent to \cref*{eqn:lemma7_comp} and $\omega_\phi(\Lambda) \leq \lambda$ is equivalent to \cref*{eqn:lemma7_exp}, so the lemma holds.
\end{proof}
\begin{samepage}
\begin{remark}
    Throughout this paper, we will primarily focus on Banach spaces (or quasi-Banach spaces, such as $L_p(\mu)$ for $0 < p < 1$). Let $X$ and $E$ be Banach spaces and let $\phi:X \to E$ be a map. In the previous lemma, we may precompose and postcompose by dilations to obtain a map $\psi$ with $\mathcal{R}(\psi)$ in which we may choose specific values for one of $\Delta$ or $\Lambda$ and one of $\delta$ or $\lambda$. For example, let $\phi:X \to E$ be an arbitrary map and let $\Delta, \delta, \Lambda, \lambda > 0$ as in the previous lemma. Take $\Delta' = \frac{\Delta}{\Lambda}$, take $\delta' = \frac{\delta}{\lambda}$ and take $\psi: X \to E$ to be given by
    \[
    \psi(x) = \frac{\phi(\Lambda x)}{\lambda}.
    \]
    Then $\mathcal{R}(\psi) = \mathcal{R}(\phi)$ and for all $x, y \in X$, $\psi$ satisfies
    \begin{equation}
        \norm{x - y} \geq \Delta' \Rightarrow \norm{\psi(x) - \psi(y)} \geq \delta',\tag{$\ast'$}
    \end{equation}
    \begin{equation}
        \norm{x - y} \leq 1 \Rightarrow \norm{\psi(x) - \psi(y)} \leq 1\tag{$\dag'$}
    \end{equation}
    We use these types of arguments throughout, without any specific reference, and often just taking $\phi$ to be new translated map $\psi$ without explicit statement.
\end{remark}
\end{samepage}

We now establish some lower bounds on the coarse embeddability ratio between Banach spaces. The results rely on the existence of equilateral sets, which we now define.
\begin{definition}[Equilateral Sets]
    Let $X$ be a Banach space and let $\lambda > 0$. A subset $A \subseteq X$ is said to be $\lambda$-equilateral if for all $x, y \in A$ with $x \neq y$, we have $\norm{x - y} = \lambda$.
\end{definition}
\begin{proposition}
    \label{prop2.6}
    Let $X$ be a Banach space with density character $\tau$ (the smallest cardinality of a dense subset in $X$) and let $E$ be a Banach space with a $1$-equilateral set $A$ of cardinality $\tau$. Then $\mathcal{CR}(X, E) \geq 1$.
\end{proposition}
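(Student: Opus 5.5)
The plan is to build one explicit map $\phi:X\to E$ with $\kappa(\phi)\geq 1$ and $\omega(\phi)\leq 1$. Then $\mathcal{R}(\phi)\geq 1$, and hence $\mathcal{CR}(X,E)\geq 1$. The map will be a ``nearest net point'' map composed with an injection of a net of $X$ into the equilateral set $A$. Since every two distinct points of $A$ are at distance exactly $1$, every image distance is either $0$ or $1$. This controls both moduli at once. We assume $X\neq\{0\}$, so that $\tau$ is infinite.

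First, by Zorn's lemma we choose a maximal $1$-separated subset $N\subseteq X$, meaning $\norm{u-v}\geq 1$ for distinct $u,v\in N$. We then bound its cardinality. The open balls $B(u,\tfrac12)$, $u\in N$, are pairwise disjoint. Fix a dense subset $D\subseteq X$ with $|D|=\tau$. Each such ball contains a point of $D$, and distinct balls contain distinct points. This gives an injection $N\to D$, so $|N|\leq\tau=|A|$. We then fix an injection $f:N\to A$.

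Maximality of $N$ means that for every $x\in X$ there is some $n(x)\in N$ with $\norm{x-n(x)}<1$. We choose one such $n(x)$ for each $x$ (axiom of choice) and set $\phi(x)=f(n(x))$.

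We now read off the two moduli.
\begin{itemize}
\item \emph{Expansion.} For any $x,y$, the points $\phi(x),\phi(y)$ lie in $A$, so $\norm{\phi(x)-\phi(y)}\in\{0,1\}$. Hence $\omega_\phi(t)\leq 1$ for all $t$, and so $\omega(\phi)\leq 1$.
\item \emph{Compression.} Suppose $\norm{x-y}\geq 2$. If $n(x)=n(y)$, then $\norm{x-y}\leq\norm{x-n(x)}+\norm{n(y)-y}<2$, a contradiction. So $n(x)\neq n(y)$. Since $f$ is injective, $\phi(x)\neq\phi(y)$, and therefore $\norm{\phi(x)-\phi(y)}=1$. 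Thus $\kappa_\phi(2)\geq 1$, and so $\kappa(\phi)\geq 1$.
\end{itemize}

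Combining these with the conventions on quotients: if $\omega(\phi)=0$, then $\mathcal{R}(\phi)=\infty$; otherwise $\mathcal{R}(\phi)=\kappa(\phi)/\omega(\phi)\geq 1$. Either way $\mathcal{CR}(X,E)\geq\mathcal{R}(\phi)\geq 1$.

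The only step needing real care is the cardinality bound $|N|\leq\tau$, since this is what allows $N$ to be injected into $A$. It is handled by the disjoint-balls argument above. Everything else is immediate from $A$ being $1$-equilateral.
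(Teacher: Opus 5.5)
Your proof is correct and follows essentially the same route as the paper. Both arguments partition $X$ into at most $\tau$ bounded cells, send each cell to a distinct point of the $1$-equilateral set $A$, and read off $\omega(\phi)\leq 1$ and $\kappa(\phi)\geq 1$ directly from equilaterality. The differences are minor: the paper simply asserts a partition $\{X_a\}_{a\in A}$ into sets of diameter at most $1$ and uses any scale $t>1$, whereas your cells come from a maximal $1$-separated net, have diameter less than $2$, and use scale $2$; you also explicitly prove the cardinality bound $|N|\leq\tau$, which the paper leaves implicit.
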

\begin{proof}
    Since $X$ has density character $\tau$, we may partition $X$ into subsets $\{X_a\}_{a \in A}$ of diameter at most $1$. For each $a \in A$ and each $x \in X_a$, define $\phi(x) = a$, obtaining a map $\phi:X \to E$. If $\norm{x_1 - x_2} > 1$ for some $x_1, x_2 \in X$, then $x_1$ and $x_2$ are contained in distinct parts of the partition $\{X_a\}_{a \in A}$, so $\norm{\phi(x_1) - \phi(x_2)} = 1$. Thus, $\kappa(\phi)  = 1$. Additionally, for all $x_1, x_2 \in X$, we see $\norm{\phi(x_1) - \phi(x_2)} \leq 1$, so $\omega(\phi) \leq 1$. Thus, $\mathcal{R}(\phi) \geq 1$, so $\mathcal{CR}(X, E) \geq 1$.
\end{proof}
\begin{samepage}
\begin{proposition}
    \label{prop2.7}
    Let $X$ be a separable Banach space and let $E$ be an infinite-dimensional Banach space. Then $\mathcal{CR}(X, E) \geq 1$.
\end{proposition}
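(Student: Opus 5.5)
The plan is to imitate the proof of \cref{prop2.6}. The obstacle is that an infinite-dimensional $E$ need not contain an infinite $1$-equilateral set, since there are infinite-dimensional Banach spaces with no infinite equilateral sets at all. So instead of an exactly equilateral countable set, I will find, for each $\varepsilon > 0$, an infinite set in $E$ that is \emph{almost} equilateral. Its pairwise distances will lie in an interval $[a, a + \varepsilon]$ with $a \geq 1/2$. The partition map then gives a separation ratio of at least $a/(a+\varepsilon)$, and letting $\varepsilon \to 0$ yields $\mathcal{CR}(X,E) \geq 1$. If $X = \{0\}$ the statement is trivial: $\kappa_\phi(t)$ is an infimum over the empty set, so $\kappa(\phi) = \infty$ and $\mathcal{R}(\phi) = \infty$. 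So assume $X$ has density character $\aleph_0$.

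\emph{Step 1: a separated bounded sequence.} Since $E$ is infinite-dimensional, I apply Riesz's lemma inductively to obtain unit vectors $(e_n)$ with $\operatorname{dist}(e_{n+1}, \operatorname{span}\{e_1,\dots,e_n\}) \geq 1/2$. Then $1/2 \leq \norm{e_n - e_m} \leq 2$ for all $n \neq m$.

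\emph{Step 2: Ramsey.} Fix $\varepsilon > 0$ and cover $[1/2, 2]$ by finitely many intervals $I_1, \dots, I_k$, each of length at most $\varepsilon$. Colour each pair $\{n, m\}$ with $n<m$ by the least $j$ such that $\norm{e_n - e_m} \in I_j$. By the infinite Ramsey theorem there is an infinite set $N \subseteq \mathbb{N}$ all of whose pairs receive the same colour $j$. Writing $I_j \subseteq [a, a+\varepsilon]$ with $a \geq 1/2$, the set $A = \{e_n : n \in N\}$ is countably infinite and satisfies $a \leq \norm{x - y} \leq a + \varepsilon$ for all distinct $x, y \in A$.

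\emph{Step 3: the partition map.} Since $X$ is separable, as in \cref{prop2.6} I partition $X$ into countably many pieces $\{X_x\}_{x \in A}$ of diameter at most $1$, allowing some pieces to be empty. I then define $\phi$ to be constantly $x$ on $X_x$. If $\norm{x_1 - x_2} > 1$, the two points lie in different pieces, so $\norm{\phi(x_1) - \phi(x_2)} \geq a$; hence $\kappa(\phi) \geq a$. For arbitrary $x_1, x_2$ we have $\norm{\phi(x_1)-\phi(x_2)} \leq a + \varepsilon$, so $\omega(\phi) \leq a+\varepsilon$. Therefore
\[
\mathcal{R}(\phi) \geq \frac{a}{a+\varepsilon} \geq \frac{1/2}{1/2 + \varepsilon},
\]
where the last inequality holds because $t \mapsto t/(t+\varepsilon)$ is increasing and $a \geq 1/2$. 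Taking the supremum over $\varepsilon > 0$ gives $\mathcal{CR}(X,E) \geq 1$.

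The main obstacle is Step 2, namely replacing the exact equilateral set of \cref{prop2.6} with an approximate one. The key point is that the Ramsey colouring uses only finitely many colours on a bounded range of distances, and that this range is bounded away from $0$ by Step 1. This lower bound is what keeps the ratio $a/(a+\varepsilon)$ close to $1$.
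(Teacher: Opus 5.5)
Your proposal is correct, but it takes a different route from the paper.

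\textbf{The paper's route.} The paper does not look for an approximately equilateral set inside $E$ itself. Instead it invokes Theorem~1 of Mercourakis--Vassiliadis: for every $\epsilon>0$ there is a renorming $Y$ and an isomorphism $S\colon Y\to E$ with $\norm{S}\norm{S^{-1}}<1+\epsilon$, such that $Y$ contains an infinite $1$-equilateral set. It then applies Proposition~\ref{prop2.6} in $Y$ to get a partition map with separation ratio essentially $1$. Finally it transfers this map to $E$ through the normalized isomorphism $T=\norm{S^{-1}}S$, which satisfies $\norm{T^{-1}}=1$ and $\norm{T}<1+\epsilon$, so the ratio drops by at most a factor $1/(1+\epsilon)$.

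\textbf{Your route.} You replace that renorming theorem with two elementary facts:
\begin{enumerate}
\item Riesz's lemma gives a bounded, uniformly separated sequence in $E$.
\item The finite-colour Ramsey theorem extracts an infinite subset whose pairwise distances lie in $[a,a+\varepsilon]$ with $a\geq 1/2$.
\end{enumerate}
This is all the partition map needs, since it yields $\kappa(\phi)\geq a$ and $\omega(\phi)\leq a+\varepsilon$. Your handling of the edge case $X=\{0\}$ and of the limit $\varepsilon\to 0$ is also fine.

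\textbf{What each approach buys.} Your argument is self-contained and works directly in $E$, with no renorming or composition step. It also shows more. The same argument gives $\mathcal{CR}(X,M)\geq 1$ for separable $X$ and any target metric space $M$ containing an infinite bounded uniformly separated set, i.e.\ a bounded set that is not totally bounded. This makes clear that exact equilateral sets play no role here. That is exactly why the obstruction you point out, namely infinite-dimensional spaces with no infinite equilateral sets such as Terenzi's renorming of $\ell_1$, is harmless. The paper's approach instead reduces the statement to Proposition~\ref{prop2.6} with exact equilateral sets, at the cost of depending on a considerably deeper external result.
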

\begin{proof}
    Let $\epsilon > 0$. By Theorem 1 of ~\cite{mercourakis2013equilateralsetsinfinitedimensional}, there exists a Banach space $Y$ with a linear isomorphism $S: Y \to E$ satisfying $\norm{S}\norm{S^{-1}} < 1 + \epsilon$ such that $Y$ contains an infinite $1$-equilateral set . By Proposition \ref{prop2.6}, there exists a map $\phi:X \to Y$ with $\mathcal{R}(\phi) \geq 1 - \epsilon$.
    Set $T = \norm{S^{-1}} \cdot S$. Clearly, 
    \[
    \norm{T^{-1}} = \norm{S^{-1}\norm{S^{-1}}^{-1}} = \norm{S^{-1}}\norm{S^{-1}}^{-1} = 1.
    \]
    Additionally,
    \[
    \norm{T} = \norm{S}\norm{S^{-1}} < 1 + \epsilon.
    \]
    Now, for all $x, y \in X$,
    \[
    \norm{\phi(x) - \phi(y)} \leq \norm{T(\phi(x)) - T(\phi(y))} \leq (1 + \epsilon)\norm{\phi(x) - \phi(y)}.
    \]
    Thus,
    \[
    \mathcal{R}(T \circ \phi) \geq \frac{\kappa(\phi)}{(1 + \epsilon) \omega(\phi)} \geq \frac{1}{1 + \epsilon}\mathcal{R}(\phi) \geq \frac{1 - \epsilon}{1 + \epsilon}.
    \]
    Since $\epsilon > 0$ is arbitrary, we see that $\mathcal{CR}(X, E) \geq 1$.
\end{proof}
\end{samepage}
The final result of this section establishes the coarse embeddability ratio into a finite-dimensional space. This completes several cases in later results (particularly, in Section 5, where we ignore finite-dimensional $L_p(\mu)$ spaces).
\begin{proposition}
    \label{prop2.8}
    Let $X$ and $E$ be Banach spaces with $E$ finite-dimensional and $\dim(X) > \dim(E)$. Then $\mathcal{CR}(X, E) = 0$.
\end{proposition}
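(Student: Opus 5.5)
Since $\mathcal{CR}(X,E)\geq 0$ trivially, it suffices to show that no map $\phi:X\to E$ has $\mathcal{R}(\phi)>0$. The approach is a volume-counting argument. Suppose for contradiction that some $\phi$ satisfies $\mathcal{R}(\phi)>K$ for some $K>0$. Lemma \ref{lemma2.4} then gives constants $\Delta,\delta,\Lambda,\lambda>0$ satisfying (\ref{eqn:lemma7_comp}) and (\ref{eqn:lemma7_exp}). After the rescaling described in the Note following Lemma \ref{lemma2.4}, we may assume:
\begin{itemize}
\item $\norm{x-y}\leq 1$ implies $\norm{\phi(x)-\phi(y)}\leq 1$;
\item $\norm{x-y}\geq \Delta$ implies $\norm{\phi(x)-\phi(y)}\geq \delta>0$.
\end{itemize}
Only the positivity of $\delta$ and the finiteness of the expansion will matter.

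First I upgrade the small-scale bound to a coarse Lipschitz bound, using the chaining argument from the preliminaries. Given $x,y\in X$, split the segment $[x,y]$ into $\lceil \norm{x-y}\rceil$ pieces of length at most $1$. Applying the triangle inequality along the chain gives
\[
\norm{\phi(x)-\phi(y)}\leq \norm{x-y}+1 .
\]

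Next, let $n=\dim E$. Since $\dim X>n$, fix a linear subspace $F\subseteq X$ with $\dim F=n+1$; this covers the case of infinite-dimensional $X$ as well. For large $R>0$, choose a maximal $\Delta$-separated subset $S_R$ of the ball $B_F(0,R)$. Maximality means the balls of radius $\Delta$ around $S_R$ cover $B_F(0,R)$. Comparing Lebesgue measure in $F\cong\mathbb{R}^{n+1}$ then gives
\[
|S_R|\geq (R/\Delta)^{n+1}.
\]

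Now look at the images. By the second bullet, $\phi(S_R)$ is $\delta$-separated, and in particular $\phi$ is injective on $S_R$. By the coarse Lipschitz bound, $\phi(S_R)$ lies in the ball $B_E(\phi(0),R+1)$. The balls of radius $\delta/2$ around the points of $\phi(S_R)$ are disjoint and lie in $B_E(\phi(0),R+1+\delta/2)$. Comparing volumes in $E\cong\mathbb{R}^n$ gives
\[
|S_R|=|\phi(S_R)|\leq \left(\frac{2(R+1)+\delta}{\delta}\right)^n .
\]
The left side grows like $R^{n+1}$ and the right like $R^n$, so letting $R\to\infty$ gives a contradiction. Hence $\mathcal{R}(\phi)=0$ for every $\phi$, and therefore $\mathcal{CR}(X,E)=0$.

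I expect no serious obstacle. The points needing care are bookkeeping:
\begin{itemize}
\item Lemma \ref{lemma2.4} requires $\mathcal{R}(\phi)>K$ for some $K>0$, which is exactly what $\mathcal{R}(\phi)>0$ provides. The degenerate conventions (for instance $\omega(\phi)=\infty$) are already absorbed into that lemma.
\item The volume comparisons use the fact that all norms on a finite-dimensional space are equivalent to a Lebesgue measure setup. Balls are translates and dilates of one convex body, so the packing and covering bounds hold with exactly the stated constants.
\end{itemize}
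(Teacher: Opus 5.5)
Your proof is correct and uses the same volume-packing argument as the paper. The paper restricts to a subspace of dimension exceeding $\dim E$ and packs disjoint $\delta/2$-balls around the images of a $\Delta$-spaced grid $Y_k$ (with $k^n$ points) into a ball whose radius chaining bounds linearly in $k$, which gives a contradiction from $k^{n/m}$ versus $k$. Your maximal $\Delta$-separated subset of $B_F(0,R)$, with volumes computed directly for the given norms instead of explicit grids and a reduction to Euclidean norms, is only a cosmetic variant of that argument.
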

\begin{proof}
    Let $m = \dim(E)$. Let $Z \subseteq X$ be a finite-dimensional subspace of $X$ with $\dim(Z) > m$ and set $n = \dim(Z)$. Since $Z$ is contained within $X$, it suffices to show that $\mathcal{CR}(Z, E) = 0$ (since we may just restrict every map $X \to E$ to a map $Z \to E$). WLOG, we may suppose $Z = \mathbb{R}^n$ and $E = \mathbb{R}^m$, as every norm on a finite-dimensional vector space is equivalent. Let $\{e_1, ..., e_n\}$ denote the standard basis of $\mathbb{R}^n$. Additionally, throughout this proof, for each $k \in \mathbb{N}$, $\mu_k$ denotes the $k$-dimensional Lebesgue measure and $C_k$ denotes the constant so $\mu_k(B_{\mathbb{R}^k}) = C_k$ (where $B_{\mathbb{R}^k}$ is the unit ball in $\mathbb{R}^k$).\\\\
    Suppose $\phi: \mathbb{R}^n \to \mathbb{R}^m$ with $\mathcal{R}(\phi) > 0$. By Lemma \ref{lemma2.4}, there exist $\Delta, \delta, \Lambda, \lambda > 0$ such that $\frac{\delta}{\lambda} > 0$ and for all $x, y \in \mathbb{R}^n$,
    \[
    \norm{x - y} \geq \Delta \Rightarrow \norm{\phi(x) - \phi(y)} \geq \delta
    \]
    \[
    \norm{x - y} \leq \Lambda \Rightarrow \norm{\phi(x) - \phi(y)} \leq \lambda.
    \]
    Let $N \in \mathbb{N}$ such that $\frac{\Delta}{N} \leq \Lambda$. For each $k \in \mathbb{N}$, set
    \[
    X_k = \left\{\sum\limits_{i = 1}^n \frac{\Delta a_i e_i}{N}: (a_1, ..., a_n) \in \{0, 1, ..., N(k - 1)\}^n\right\}
    \]
    and let
    \[
    Y_k = \left\{\sum\limits_{i = 1}^n \Delta a_i e_i: (a_1, ..., a_n) \in \{0, 1, ..., (k - 1)\}^n\right\}.
    \]
    Let $k \in \mathbb{N}$. Note $Y_k \subseteq X_k$. For each $x \in Y_k$, let $V_x$ be the open ball in $\mathbb{R}^m$ centered at $\phi(x)$ of radius $\delta/2$. For each $x, y \in Y_k$, we see $\norm{x - y} \geq \Delta$, so $\norm{\phi(x) - \phi(y)} \geq \delta$. Thus, the $V_x$ are disjoint.
    Let $x_0, y_0 \in X_k$ satisfy 
    \[
    R_k := \norm{\phi(x_0) - \phi(y_0)} = \max\limits_{z_1, z_2 \in X_k} \norm{\phi(z_1) - \phi(z_2)}.
    \]
    Then, if $B_{\mathbb{R}^m}(\phi(x_0), R_k + \delta/2))$ denotes the closed ball centered at $\phi(x_0)$ with radius $R_k + \delta/2$ in $\mathbb{R}^m$, we see
    \[
    \bigcup\limits_{x \in Y_k} V_x \subseteq B_{\mathbb{R}^m}(\phi(x_0), R_k + \delta/2),
    \]
    so since the $V_x$ are disjoint,
    \[
    C_m(\delta/2)^m k^n = \sum\limits_{x \in Y_k} \mu_m(V_x) \leq \mu_m(B_{\mathbb{R}^m}(\phi(x_0), R_k + \delta/2)) = C_m (R_k + \delta/2)^m
    \]
    Thus, $R_k \geq \frac{\delta}{2} (k^{n/m} - 1)$. Now, observe that for $x, y \in X_k$, we see
    \[
    \norm{x - y} \leq \left(\sum\limits_{i = 1}^n (N(k - 1)\Lambda)^2\right)^{1/2} = N(k - 1)\Lambda\sqrt{n}.
    \]
    Set $M_k = \lceil N(k - 1) \sqrt{n} \rceil$. Then we may put points $z_0, z_1 ..., z_{M_k}$ with $x = z_0$ and $y = z_{M_k}$ such that $\norm{z_j - z_{j - 1}} \leq \Lambda$ for all $1 \leq j \leq M_k$. Thus,
    \[
    \norm{\phi(x) - \phi(y)} \leq \sum\limits_{j = 1}^{M_k}\norm{\phi(z_j) - \phi(z_{j - 1})} \leq M_k\lambda.
    \]
    Thus, $R_k \leq M_k \lambda$. Since $k$ is arbitrary and $n/m > 1$, we make take $k \in \mathbb{N}$ sufficiently large such that
    \[
    Nkn \lambda < \frac{\delta}{2}(k^{n/m} - 1)
    \]
    Note $M_k \leq Nkn$. Then we see
    \[
    R_k \leq M_k \lambda \leq N kn \lambda < \frac{\delta}{2}(k^{n/m} - 1) \leq R_k,
    \]
    a contradiction. Thus, $\mathcal{R}(\phi) = 0$. Thus, $\mathcal{CR}(\mathbb{R}^n, \mathbb{R}^m) = 0$.
\end{proof}
Of course, if $n \leq m$, then $\mathbb{R}^n$ simultaneously coarsely and uniformly embeds into $\mathbb{R}^m$ via the inclusion map. Thus, in combination with the above proposition, if $E$ is finite-dimensional, we have 
\[
\mathcal{CR}(X, E) = \begin{cases}
    \infty & \dim(X) \leq \dim(E)\\
    0 & \dim(X) > \dim(E).
\end{cases}
\]

%% file: 03_property_q.tex
\section{Coarse Embeddability Ratios and Kalton's Property \texorpdfstring{$\mathcal{Q}$}{Q}}
In \cite{kalton2007coarse}, Kalton proved that $c_0$ does not coarsely or uniformly embed into any reflexive Banach space. During his investigations into this problem, he formulated the $\mathcal{Q}$-property, which concerns the embeddability of certain graphs on tuples of natural numbers into a Banach space. We define this property in the following definitions.
\begin{definition}
    Let $\mathbb{M} \subseteq \mathbb{N}$. Then for all $r \in \mathbb{N}$, $[\mathbb{M}]^r$ denotes the collection of all subsets of $\mathbb{M}$ with cardinality $r$. Additionally, $[\mathbb{M}]^\omega$ denotes the collection of all infinite subsets of $\mathbb{M}$.
\end{definition}
\begin{definition}[The Kalton Interlacing Graph~\cite{kalton2007coarse}]
    The Kalton Interlacing graph $K_r(\mathbb{N})$ makes $[\mathbb{N}]^r$ into a graph by saying that two distinct elements  $A = \{a_1, ..., a_r\}$ and $B = \{b_1, ..., b_r\}$ (both written in increasing order) are connected by an edge if they interlace:
    \[
    a_1 \leq b_1 \leq a_2 \leq \cdots \leq a_r \leq b_r  \quad \text{ or }\quad b_1 \leq a_1 \leq b_2 \leq \cdots \leq b_r \leq a_r.
    \]
    Note this graph is connected (this can be seen by swapping the minimal disagreeing elements at most $r$ times). This defines a natural shortest path metric on $K_r(\mathbb{N})$, which we denote by $d_K$.
\end{definition}
\begin{definition}[Property $\mathcal{Q}(\epsilon, \delta)$~\cite{kalton2007coarse}]
    Let $\epsilon, \delta > 0$. A metric space $(M, d)$ has property $\mathcal{Q}(\epsilon, \delta)$ if for all $r \in \mathbb{N}$ and all $f:K_r(\mathbb{N}) \to M$ with $\omega_f(1) \leq \delta$, there exists $\mathbb{M} \in [\mathbb{N}]^\omega$ such that for all $A, B \in [\mathbb{M}]^r$ with $\max(A) < \min(B)$, we have
    \[
    d(f(A), f(B)) \leq \epsilon.
    \]
    Additionally, for all $\epsilon > 0$, we set
    \[
    \Delta_M(\epsilon) = \sup\{\delta \geq 0: \text{$M$ has property $\mathcal{Q}(\epsilon, \delta)$}\}.
    \]
\end{definition}
\begin{definition}[Property $\mathcal{Q}$~\cite{kalton2007coarse}]
    Let $(M, d)$ be a metric space. The property $\mathcal{Q}$ constant of $M$, denoted $\mathcal{Q}_M$, is given by
    \[
    \mathcal{Q}_M = \sup\{c \geq 0: \Delta_M(\epsilon) \geq c\epsilon \text{ for all $\epsilon > 0$}\}.
    \]
    We say $M$ has property $\mathcal{Q}$ if $Q_M > 0$.
\end{definition}
The following lemma establishes the scaling of $\Delta_X(\epsilon)$ for a Banach space $X$. It relies on a simple scaling argument.
\begin{lemma}
\label{lemma3.5}
    Let $X$ be a Banach space. Then, for all $\epsilon > 0$, $\Delta_X(\epsilon) = \mathcal{Q}_X(\epsilon)$.
\end{lemma}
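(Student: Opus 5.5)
We read $\mathcal{Q}_X(\epsilon)$ in the statement as the product $\mathcal{Q}_X\cdot\epsilon$, so the claim is that $\Delta_X(\epsilon)=\mathcal{Q}_X\,\epsilon$ for every $\epsilon>0$. The plan has two steps. First, show by a dilation argument that $\Delta_X$ is positively homogeneous, $\Delta_X(\epsilon)=\epsilon\,\Delta_X(1)$. Second, use that homogeneity in the definition of $\mathcal{Q}_X$ to identify $\mathcal{Q}_X=\Delta_X(1)$. Combining the two gives the statement.

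\textbf{Step 1 (scaling).} Fix $t>0$. We claim that $X$ has property $\mathcal{Q}(\epsilon,\delta)$ if and only if it has property $\mathcal{Q}(t\epsilon,t\delta)$. Suppose $X$ has $\mathcal{Q}(\epsilon,\delta)$, fix $r\in\mathbb{N}$, and let $f:K_r(\mathbb{N})\to X$ satisfy $\omega_f(1)\le t\delta$. Then $g=t^{-1}f$ satisfies $\omega_g(1)\le\delta$. So there is $\mathbb{M}\in[\mathbb{N}]^\omega$ with $\norm{g(A)-g(B)}\le\epsilon$ for all $A,B\in[\mathbb{M}]^r$ with $\max(A)<\min(B)$. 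Multiplying by $t$ gives $\norm{f(A)-f(B)}\le t\epsilon$ on the same $\mathbb{M}$. The converse follows by applying this with $t^{-1}$.

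Hence the set $\{\delta\ge 0:\ X \text{ has } \mathcal{Q}(t\epsilon,\delta)\}$ is exactly $t$ times the set $\{\delta\ge0:\ X\text{ has }\mathcal{Q}(\epsilon,\delta)\}$. Taking suprema gives $\Delta_X(t\epsilon)=t\,\Delta_X(\epsilon)$, with the convention $t\cdot\infty=\infty$. These sets are nonempty: $\delta=0$ always belongs to them, since $\omega_f(1)\le 0$ forces $f$ to be constant on the connected graph $K_r(\mathbb{N})$. Taking $\epsilon=1$ and $t=\epsilon$ yields $\Delta_X(\epsilon)=\epsilon\,\Delta_X(1)$ for all $\epsilon>0$.

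\textbf{Step 2 (identifying $\mathcal{Q}_X$).} By Step 1, the condition ``$\Delta_X(\epsilon)\ge c\epsilon$ for all $\epsilon>0$'' is equivalent to $\epsilon\,\Delta_X(1)\ge c\,\epsilon$ for all $\epsilon>0$, that is, to $c\le\Delta_X(1)$. Therefore
\[
\mathcal{Q}_X=\sup\{c\ge0:\ c\le \Delta_X(1)\}=\Delta_X(1),
\]
and this also holds when $\Delta_X(1)=\infty$. Substituting back into Step 1 gives $\Delta_X(\epsilon)=\epsilon\,\Delta_X(1)=\mathcal{Q}_X\,\epsilon$, as required.

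The only delicate points are bookkeeping. One must check that the supremum in the definition of $\Delta_X$ commutes with multiplication by $t$, which holds because the admissible sets of $\delta$ correspond bijectively under $\delta\mapsto t\delta$. One must also handle the degenerate values $0$ and $\infty$ consistently. No deeper obstacle is expected; the argument uses only that the norm on $X$ is homogeneous under dilation, which is why it is stated for Banach spaces rather than general metric spaces.
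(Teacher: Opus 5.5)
Your proposal is correct and rests on the same dilation $f \mapsto t f$ as the paper's proof. The paper obtains $\Delta_X(\epsilon)\ge\mathcal{Q}_X\epsilon$ directly from the definition and rules out $\mathcal{Q}(\epsilon,\alpha\epsilon)$ for $\alpha>\mathcal{Q}_X$ by contradiction. You instead prove the homogeneity $\Delta_X(t\epsilon)=t\,\Delta_X(\epsilon)$ first and then read off $\mathcal{Q}_X=\Delta_X(1)$, which is only a reorganization of the same argument (and it also handles the value $\infty$ cleanly).
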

\begin{proof}
    Let $\epsilon > 0$. By definition of $\mathcal{Q}_X$, we see $\Delta_X(\epsilon) \geq \mathcal{Q}_X\epsilon$. Now, suppose there exists $\alpha > \mathcal{Q}_X$ such that $X$ has property $\mathcal{Q}(\epsilon, \alpha \epsilon)$. Let $\eta > 0$ and let $f:K_r(\mathbb{N}) \to X$ be a map with $\omega_f(1) \leq \alpha \eta$. Consider the map $g = \frac{\epsilon}{\eta}f$. Then $\omega_g(1) \leq \alpha \epsilon$, so since $X$ has property $\mathcal{Q}(\epsilon, \alpha \epsilon)$, there exists $\mathbb{M} \in [\mathbb{N}]^\omega$ such that for all $A, B \in [\mathbb{M}]^r$ with $\max(A) < \min(B)$, we have 
    \[
    d(g(A), g(B)) \leq \epsilon.
    \]
    Thus, for all $A, B \in [\mathbb{M}]^r$ with $\max(A) < \min(B)$, we see 
    \[
    d(f(A), f(B)) \leq \eta,
    \]
    so $X$ has property $\mathcal{Q}(\eta, \alpha \eta)$. Thus, for all $\eta > 0$, we have $\Delta_X(\eta) \geq \alpha \eta$. This means $\alpha \leq \mathcal{Q}_X$, a contradiction. It follows that $\Delta_X(\epsilon) \leq \mathcal{Q}_X \epsilon$, so $\Delta_X(\epsilon) = \mathcal{Q}_X\epsilon$.
\end{proof}
Part of the significance of Kalton's work is that it allows for the use Ramsey theory and related combinatorial methods to solve coarse and uniform embedding problems.
It also inspired investigating embeddings of other traditional graphs (the Hamming graph, the Johnson graph, etc.), which will be relied upon in the next section. The following is an important example of spaces with property $\mathcal{Q}$.
\begin{definition}[Stable Metric Space]
    A metric space $(M, d)$ is said to be stable if for all sequences $\{x_n\}_{n = 1}^\infty, \{y_m\}_{m = 1}^\infty$ in $M$,
    \[
    \lim\limits_{n \to \infty} \lim\limits_{m \to \infty} d(x_n, y_m) = \lim\limits_{m \to \infty} \lim\limits_{n \to \infty} d(x_n, y_m),
    \]
    provided both limits exist.
\end{definition}

In \cite{kalton2007coarse}, Kalton shows that reflexive Banach spaces and stable metric spaces both satisfy property $\mathcal{Q}$. Additionally, he showed that if a Banach space $X$ coarsely or uniformly embeds into a space satisfying property $\mathcal{Q}$, then $X$ itself must have property $\mathcal{Q}$.

In the following proposition, we use Kalton's property $\mathcal{Q}$ to establish an upper bound on the separation ratio between a space not having property $\mathcal{Q}$ and a space having property $\mathcal{Q}$.

\begin{proposition}
    \label{prop3.5}
    Let $X$ be a Banach space not satisfying the $\mathcal{Q}$-property and let $E$ be a Banach space satisfying the $\mathcal{Q}$-property. Then $\mathcal{CR}(X, E) \leq 1/\mathcal{Q}_E$. 
\end{proposition}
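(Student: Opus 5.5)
The plan is to argue by contradiction, combining Lemma \ref{lemma2.4} with Lemma \ref{lemma3.5}. Suppose $\phi: X \to E$ satisfies $\mathcal{R}(\phi) > 1/\mathcal{Q}_E$. By Lemma \ref{lemma2.4} there are constants $\Delta, \delta, \Lambda, \lambda > 0$ with $\frac{\delta}{\lambda} > 1/\mathcal{Q}_E$ such that
\[
\norm{x - y} \geq \Delta \Rightarrow \norm{\phi(x) - \phi(y)} \geq \delta, \qquad \norm{x - y} \leq \Lambda \Rightarrow \norm{\phi(x) - \phi(y)} \leq \lambda .
\]
The strict inequality gives $\lambda/\mathcal{Q}_E < \delta$, so we can fix $\epsilon'$ with $\lambda/\mathcal{Q}_E < \epsilon' < \delta$. (If $\mathcal{Q}_E = \infty$, any $\epsilon' \in (0,\delta)$ works.)

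Next we use the hypothesis on $X$. Since $X$ fails property $\mathcal{Q}$, we have $\mathcal{Q}_X = 0$. Lemma \ref{lemma3.5} then gives $\Delta_X(\epsilon) = 0$ for every $\epsilon > 0$. Take $\epsilon = \Delta$. Then $X$ does not have property $\mathcal{Q}(\Delta, \Lambda)$, because $\Lambda > 0 = \Delta_X(\Delta)$. Unwinding the definition, there are $r \in \mathbb{N}$ and $f: K_r(\mathbb{N}) \to X$ with $\omega_f(1) \leq \Lambda$ such that for every $\mathbb{M} \in [\mathbb{N}]^\omega$ there exist $A, B \in [\mathbb{M}]^r$ with $\max(A) < \min(B)$ and $\norm{f(A) - f(B)} > \Delta$.

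Now set $g = \phi \circ f : K_r(\mathbb{N}) \to E$. Adjacent vertices are sent by $f$ to points at distance at most $\Lambda$, so $\omega_g(1) \leq \lambda$.

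On the $E$ side, Lemma \ref{lemma3.5} gives $\Delta_E(\epsilon') = \mathcal{Q}_E \epsilon' > \lambda$. By the definition of the supremum, $E$ has property $\mathcal{Q}(\epsilon', \delta'')$ for some $\delta'' > \lambda$. Property $\mathcal{Q}(\epsilon',\delta'')$ implies $\mathcal{Q}(\epsilon',\lambda)$, since the hypothesis $\omega_g(1) \leq \lambda$ is stronger than $\omega_g(1) \leq \delta''$. Applying it to $g$ gives an infinite $\mathbb{M}$ such that $\norm{g(A) - g(B)} \leq \epsilon' < \delta$ for all $A, B \in [\mathbb{M}]^r$ with $\max(A) < \min(B)$.

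For this same $\mathbb{M}$, the choice of $f$ provides such $A, B$ with $\norm{f(A) - f(B)} > \Delta$. The compression condition then forces $\norm{g(A) - g(B)} \geq \delta$, which is a contradiction. Hence $\mathcal{R}(\phi) \leq 1/\mathcal{Q}_E$ for every $\phi$, and so $\mathcal{CR}(X,E) \leq 1/\mathcal{Q}_E$.

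The main point needing care is the bookkeeping of strict versus non-strict inequalities. It enters in two places:
\begin{itemize}
\item choosing $\epsilon'$ strictly between $\lambda/\mathcal{Q}_E$ and $\delta$;
\item passing from $\Delta_E(\epsilon') > \lambda$ to property $\mathcal{Q}(\epsilon', \lambda)$ via monotonicity in the second parameter.
\end{itemize}
Both steps use the strict inequality $\delta/\lambda > 1/\mathcal{Q}_E$ supplied by Lemma \ref{lemma2.4}.
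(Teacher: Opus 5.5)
Your proof is correct and is essentially the paper's argument. You take the constants from Lemma \ref{lemma2.4}, use Lemma \ref{lemma3.5} to extract a bad map $f\colon K_r(\mathbb{N})\to X$, and apply property $\mathcal{Q}$ of $E$ to $\phi\circ f$ to contradict the compression bound. The only difference is bookkeeping: the paper rescales to $\Lambda=1$, $\lambda=\mathcal{Q}_E-\epsilon$ and lets $\epsilon\to 0$ at the end, whereas you keep general constants and choose $\epsilon'$ strictly between $\lambda/\mathcal{Q}_E$ and $\delta$, which also covers the degenerate case $\mathcal{Q}_E=\infty$.
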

\begin{proof}
    First, note since $E$ has the $\mathcal{Q}$-property, we see $E$ satisfies the $\mathcal{Q}(1, \mathcal{Q}_E - \epsilon)$-property for all $0 < \epsilon < \mathcal{Q}_E$. Let $0 < \epsilon < \mathcal{Q}_E$ and suppose $\phi: X \to E$ with $\mathcal{R}(\phi) > 1/(\mathcal{Q}_E - \epsilon)$. Using Lemma \ref{lemma2.4}, let $\Delta > 0$ and $\delta > 1$ such that for all $x, y \in X$,
    \[
    \norm{x - y} \geq \Delta \Rightarrow \norm{\phi(x) - \phi(y)} \geq \delta
    \]
    \[
    \norm{x - y} \leq 1 \Rightarrow \norm{\phi(x) - \phi(y)} \leq \mathcal{Q}_E - \epsilon.
    \]
    Note this ensures $\omega_\phi(1) \leq \mathcal{Q}_E - \epsilon$.\\\\
    Since $X$ does not satisfy the $\mathcal{Q}$-property, we see that $X$ does not have the $\mathcal{Q}(\Delta, 1)$ property by Lemma \ref{lemma3.5}. Thus, there exists a map $f:K_r(\mathbb{N}) \to X$ with $\omega_f(1) \leq 1$ such that for every infinite subset $\mathbb{M}$ of $\mathbb{N}$, there exists $C, D \in [\mathbb{M}]^r$ with $\max(C) < \min(D)$ such that $\norm{f(C) - f(D)} > \Delta$.\\\\
    Since $\omega_{\phi}(1) \leq \mathcal{Q}_E - \epsilon$ and $\omega_f(1) \leq 1$, we see that $\omega_{\phi \circ f}(1) \leq \mathcal{Q}_E - \epsilon$. Thus, there exists an infinite subset $\mathbb{M}$ of $\mathbb{N}$ such that for all $A, B \in [\mathbb{M}]^r$ with $\max(A) < \min(B)$,
    \[
    \norm{\phi(f(A)) - \phi(f(B))} \leq 1.
    \]
    Let $C, D \in [\mathbb{M}]^r$ with $\max(C) < \min(D)$ and $\norm{f(C) - f(D)} > \Delta$. Then
    \[
    \norm{\phi(f(C)) - \phi(f(D))} \geq \delta > 1,
    \]
    a contradiction.
    Thus, $\mathcal{CR}(X, E) \leq 1/(\mathcal{Q}_E - \epsilon)$ for all $\epsilon > 0$. Since $\epsilon > 0$ is arbitrary, we see $\mathcal{CR}(X, E) \leq 1/\mathcal{Q}_E$.
\end{proof}
\begin{corollary}
    Let $E$ be a reflexive Banach space. Then $\mathcal{CR}(c_0, E) \leq 2$.
\end{corollary}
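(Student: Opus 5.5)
The plan is to apply Proposition \ref{prop3.5} with $X = c_0$. For that we need three facts: that $c_0$ fails property $\mathcal{Q}$, that every reflexive $E$ has property $\mathcal{Q}$, and that in fact $\mathcal{Q}_E \geq 1/2$. The last one converts the bound $1/\mathcal{Q}_E$ into $2$.

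\textbf{Step 1: $c_0$ fails property $\mathcal{Q}$.} This comes from Kalton's work in \cite{kalton2007coarse}. Define $f:K_r(\mathbb{N}) \to c_0$ by $f(A) = \sum_{a \in A} e_a$, or better by the summing-basis variant $f(A) = \sum_{j=1}^r \mathbf{1}_{[1, a_j]}$. If $A$ and $B$ interlace, then at each coordinate the counts $|\{j : a_j \geq n\}|$ and $|\{j : b_j \geq n\}|$ differ by at most $1$, so $\omega_f(1) \leq 1$. On the other hand, when $\max A < \min B$, the coordinate $n = \max A + 1$ gives $\norm{f(A) - f(B)}_\infty = r$. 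For any $\Delta$, take $r > \Delta$; then no infinite $\mathbb{M}$ satisfies the $\mathcal{Q}(\Delta, 1)$ condition. So $\Delta_{c_0}(\Delta) < 1$ for large $\Delta$, which forces $\mathcal{Q}_{c_0} = 0$.

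\textbf{Step 2: reflexive spaces satisfy $\mathcal{Q}(\epsilon, \delta)$ whenever $\delta < \epsilon/2$.} This is the main obstacle, because the corollary needs the precise constant $1/2$, not just positivity. Kalton proves that a reflexive $E$ has $\mathcal{Q}(\epsilon,\delta)$ for all $\delta<\epsilon/2$. The argument goes as follows.
\begin{itemize}
\item Given $f$ with $\omega_f(1) \leq \delta$, use Ramsey's theorem and weak compactness of bounded sets (after translating so that $f$ is bounded on the relevant sets) to pass to $\mathbb{M}$ on which iterated weak limits exist.
\item Write $f(A) = \sum_k$ of weakly null tree differences.
\item For separated $A < B$, compare both $f(A)$ and $f(B)$ with an intermediate set $C$ that interlaces with each. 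The weak-limit structure makes $\norm{f(A) - f(B)}$ essentially bounded by $\norm{f(A) - f(C)} + \norm{f(C) - f(B)} \leq 2\delta$, up to arbitrarily small error.
\end{itemize}
I would cite this from \cite{kalton2007coarse} rather than reprove it. The resulting bound is $\Delta_E(\epsilon) \geq \epsilon/2$, hence $\mathcal{Q}_E \geq 1/2$. If the citation only gives an unspecified constant, I would redo the interlacing step: choose $C \in [\mathbb{M}]^r$ interlacing both $A$ and $B$, using room in $\mathbb{M}$ between the two sets. This explicitly yields the factor $2$.

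\textbf{Step 3: conclude.} Combining the steps, Proposition \ref{prop3.5} gives $\mathcal{CR}(c_0, E) \leq 1/\mathcal{Q}_E \leq 2$.
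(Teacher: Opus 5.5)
Your main line is the paper's proof. You apply Proposition~\ref{prop3.5} with $X=c_0$, using that $c_0$ fails $\mathcal{Q}$ (Theorem 3.5 of \cite{kalton2007coarse}) and that $\mathcal{Q}_E\ge 1/2$ for reflexive $E$ (Theorem 4.1 of \cite{kalton2007coarse}). Your direct check of Step 1 is correct; it is Kalton's summing-basis example. For interlacing $A,B$ the counts $|\{j: a_j\ge n\}|$ and $|\{j: b_j\ge n\}|$ differ by at most $1$. For $\max A<\min B$ the coordinate $n=\max A+1$ gives distance exactly $r$. So $c_0$ fails $\mathcal{Q}(\Delta,1)$ for every $\Delta$, which is what the proof of Proposition~\ref{prop3.5} uses.

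You should discard your account of where the factor $2$ comes from, and in particular the fallback of choosing $C\in[\mathbb{M}]^r$ interlacing both $A$ and $B$. For $r\ge 3$ no such $C$ exists:
\begin{itemize}
\item If $C=\{c_1<\cdots<c_r\}$ interlaces $A$, then $c_{r-1}\le a_r$ in either interlacing pattern.
\item If $C$ interlaces $B$, then $c_2\ge b_1$.
\item With $\max A<\min B$ these force $c_2>c_{r-1}$, i.e.\ $r\le 2$. Leaving room in $\mathbb{M}$ does not help.
\end{itemize}
It has to fail, because that argument uses only the triangle inequality and $\omega_f(1)\le\delta$. It would therefore give $\mathcal{Q}_M\ge 1/2$ for every metric space. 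That contradicts your own Step 1 map into $c_0$, which has edge distortion $1$ and separated distance $r>2$. The constant $1/2$ genuinely needs Kalton's weak-compactness argument in the reflexive space. So the corollary rests on citing Theorem 4.1 of \cite{kalton2007coarse}, exactly as the paper does; with that citation your proof is complete.
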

\begin{proof}
    Theorem 4.1 of \cite{kalton2007coarse} shows $\mathcal{Q}_E \geq 1/2$ and Theorem 3.5 of \cite{kalton2007coarse} shows that $c_0$ does not have the $\mathcal{Q}$-property. Thus, by Proposition \ref{prop3.5}, 
    $\mathcal{CR}(c_0, E) \leq 1/\mathcal{Q}_E \leq 2$.
\end{proof}
\begin{corollary}
    Let $E$ be a stable Banach space. Then $\mathcal{CR}(c_0, E) \leq 1$. In particular, if $E$ is infinite-dimensional, then $\mathcal{CR}(c_0, E) = 1$.
\end{corollary}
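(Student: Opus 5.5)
The plan is to combine Proposition \ref{prop3.5} with the fact that stable metric spaces have the best possible property $\mathcal{Q}$ constant, and then use Proposition \ref{prop2.7} for the lower bound.

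For the upper bound, recall from \cite{kalton2007coarse} that $c_0$ fails property $\mathcal{Q}$ (Theorem 3.5 there). What remains is to show that a stable Banach space $E$ satisfies $\mathcal{Q}_E \geq 1$. Then Proposition \ref{prop3.5} gives $\mathcal{CR}(c_0, E) \leq 1/\mathcal{Q}_E \leq 1$.

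To obtain $\mathcal{Q}_E \geq 1$, I would invoke Kalton's result that a stable metric space $M$ has property $\mathcal{Q}(\epsilon, \delta)$ for every $\delta < \epsilon$; that is, $\Delta_M(\epsilon) \geq \epsilon$. In \cite{kalton2007coarse} this is Theorem 4.2 or a nearby statement; the precise numbering would need to be checked. By the definition of $\mathcal{Q}_M$ this gives $\mathcal{Q}_E \geq 1$. This citation is the step I expect to be the main obstacle, since the constant must be exactly $1$ and not merely some positive number.

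If the citation only gives a qualitative version, I would reprove the bound as follows. Take $f: K_r(\mathbb{N}) \to E$ with $\omega_f(1) \leq \delta < \epsilon$. By Ramsey's theorem, pass to $\mathbb{M} \in [\mathbb{N}]^\omega$ along which $d(f(A), f(B))$ converges for the relevant patterns of $A, B$. Then use stability to exchange the order of the iterated limits when moving from a pair $A < B$ to an interlacing pair, which are at distance at most $\delta$. This is Kalton's argument, and it yields $\lim d(f(A), f(B)) \leq \delta < \epsilon$ on a further infinite subset.

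For the second claim, assume $E$ is infinite-dimensional. Since $c_0$ is separable, Proposition \ref{prop2.7} gives $\mathcal{CR}(c_0, E) \geq 1$. Combined with the upper bound, this yields $\mathcal{CR}(c_0, E) = 1$.
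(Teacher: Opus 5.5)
Your proposal is correct and matches the paper's proof: $c_0$ fails property $\mathcal{Q}$ (Theorem 3.5 of \cite{kalton2007coarse}), stable $E$ has $\mathcal{Q}_E \geq 1$, Proposition \ref{prop3.5} then gives the upper bound, and Proposition \ref{prop2.7} gives the lower bound. The citation you were unsure of is Proposition 3.1 of \cite{kalton2007coarse}, which the paper uses to obtain $\mathcal{Q}_E = 1$ directly, so your fallback Ramsey--stability reproof is not needed.
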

\begin{proof}
    By Proposition 3.1 of \cite{kalton2007coarse}, we see that $\mathcal{Q}_E = 1$ and Theorem 3.5 of \cite{kalton2007coarse} shows $c_0$ does not have the $\mathcal{Q}$-property. Thus, by Proposition \ref{prop3.5}, $\mathcal{CR}(c_0, E) \leq 1$. In the particular case, if $E$ is infinite-dimensional, then since $c_0$ is separable, we see by Proposition \ref{prop2.7}, we see $\mathcal{CR}(c_0, E) = 1$.
\end{proof}

%% file: 04_tsirelson.tex
\section{Coarse Embeddability Ratios and Tsirelson's Space}
Our next example concerns the coarse embeddability ratio between $\ell_p$ spaces and Tsirelson's space $\mathcal{T}^*$.
Tsirelson's space was originally constructed by Tsirelson in \cite{tsirel1974not}, providing the first example of a Banach space containing no (linearly) isomorphic copies of $c_0$ and $\ell_p$, $1 \leq p \leq \infty$. Shortly after, Figiel and Johnson published a related paper explaining several properties of the dual to Tsirelson's space~\cite{figiel1974uniformly}.
Today, $\mathcal{T}$ is used for the space in Figiel and Johnson's construction, while $\mathcal{T}^*$ is used for the space originally constructed by Tsirelson. This notation is justified due to the fact Tsirelson's space is reflexive. Below, we present the standard recursive norm definition for $\mathcal{T}$ given by Figiel and Johnson (although we make no attempt to verify any of the earlier properties or its well-definedness).
\begin{definition}[Tsirelson's Space]
    A collection $\{I_1, ..., I_m\}$ of disjoint intervals of natural numbers is said to be admissible if $m < \min(I_k)$ for $1 \leq k \leq m$. We define an inductive family of norms on $c_{00}$. For $x \in c_{00}$, we define $\norm{x}_0 = \norm{x}_{c_0}$. For $n \in \mathbb{N}$, we define
    \[
    \norm{x}_n = \max\left\{\norm{x}_{n - 1}, \sup \frac{1}{2}\sum\limits_{j = 1}^m \norm{P_{I_j} x}_{n - 1}\right\},
    \]
    where the supremum is taken over all admissible families of intervals and $P_{I_j} x$ represents the projection of $x$ onto the coordinates present in the interval $I_j$. Finally, we define
    \[
    \norm{x}_\mathcal{T} = \sup\limits\{\norm{x}_n: n \geq 0\},
    \]
    and let $\mathcal{T}$ be the completion of $c_{00}$ under the norm $\norm{\,\cdot\,}_\mathcal{T}$.
\end{definition}

In \cite{Baudier_2018}, Baudier et al. provided a strengthening of Tsirelson's original result: not only do $c_0$ and $\ell_p$, $1 \leq p \leq \infty$ not linearly embed into $\mathcal{T}^*$, they also do not \textit{coarsely} embed into $\mathcal{T}^*$. 
Their arguments rely on analyzing the embeddability of two types of graphs on tuples of natural numbers, which we define here.
\begin{definition}[The Johnson Graph~\cite{Baudier_2018}]
    The Johnson graph $J_r(\mathbb{N})$ makes $[\mathbb{N}]^r$ into a graph by saying that two distinct elements $A, B \in [\mathbb{N}]^r$ are connected if 
    \[
    \modulus{A \bigtriangleup B} = 2.
    \]
    As with the Kalton interlacing graph, this graph is connected. Note the shortest path metric $d_J$ on $J_r(\mathbb{N})$ is given by
    \[
    d_J(A, B) = \frac{\modulus{A \bigtriangleup B}}{2}.
    \]
\end{definition}
\begin{definition}[The Hamming Graph~\cite{Baudier_2018}]
    The Hamming graph $H_r(\mathbb{N})$ makes $[\mathbb{N}]^r$ into a graph by saying that two distinct elements $A = \{a_1, ..., a_r\}$ and $B = \{b_1, ..., b_r\}$ (both written in increasing order) are connected by an edge if
    \[
    \modulus{\{1 \leq i \leq r: a_i \neq b_i\}} = 1.
    \]
    Similar to the Johnson and Kalton graphs, this graph is connected. The shortest path metric $d_H$ on $H_r(\mathbb{N})$ is given by
    \[
    d(\{a_1, ..., a_r\}, \{b_1, ..., b_r\}) = \modulus{\{1 \leq i \leq r: a_i \neq b_i\}}.
    \]
\end{definition}
As mentioned earlier, the use of such graphs is that we may apply Ramsey-style theorems to them. We now state two such Ramsey style results, both proven in \cite{Baudier_2018}. In order to do so, it is important to first establish the concept of bimonotone bases and the support of a vector.
\begin{definition}[Schauder Basis]
    Let $X$ be a Banach space. A sequence $\{x_n\}_{n = 1}^\infty$ is said to be a Schauder basis for $X$ if for all $x \in X$, there is a unique sequence of scalars $\{\alpha_n\}_{n = 1}^\infty$ such that
    \[
    x = \sum\limits_{n = 1}^\infty \alpha_n x_n.
    \]
\end{definition}
\begin{definition}[Bimonotone Basis]
    Let $X$ be a Banach space with Schauder basis $\{x_n\}_{n = 1}^\infty$. For each $k \in \mathbb{N}$, the $k$th natural projection $P_k$ associated with $\{x_n\}_{n = 1}^\infty$ is the linear map $P_k: X \to X$ given by
    \[
    P_k\left(\sum\limits_{n = 1}^\infty \alpha_n x_n\right) = \sum\limits_{n = 1}^k \alpha_n x_n.
    \]
    We say $\{x_n\}_{n = 1}^\infty$ is a bimonotone basis if for all $k \in \mathbb{N}$,
    \[
    \norm{P_k} = \norm{I - P_k} = 1,
    \]
    where $I:X \to X$ denotes the identity operator.
\end{definition}
\begin{definition}[Support of a Vector]
    Let $X$ be a Banach space with Schauder basis $\{x_i\}_{i = 1}^\infty$. The support of a vector $x = \sum_{i = 1}^n \alpha_i x_i$ is given by
    \[
    \operatorname{supp}(x) = \{i \in \mathbb{N}: \alpha_i \neq 0\}.
    \]
    For $x, y \in X$ and $r \in \mathbb{N}$, we write $x \prec y$ to mean $\max(\operatorname{supp}(x)) < \min(\operatorname{supp}(y))$ and $r \preceq x$ to mean $r \leq \min(\operatorname{supp}(x))$, where all supports are with respect to the basis being considered.
\end{definition}
\begin{theorem}[Proposition 4.1, \cite{Baudier_2018}]
    \label{thm4.3}
    Let $Y$ be a reflexive Banach space with bimonotone basis $\{e_i\}_{i = 1}^\infty$. For all $k, r \in \mathbb{N}$, $\epsilon > 0$, and $\mathbb{M} \in [\mathbb{N}]^\omega$, and Lipschitz maps $f:([\mathbb{M}]^k, d_\bullet) \to Y$, where $d_\bullet$ can be either $d_H$ or $d_J$, there exists $\mathbb{M}' \in [\mathbb{N}]^\omega$ and $y \in Y$ satisfying the following:
    For all $A \in [\mathbb{M}']^k$, there exists $r \preceq y_A^{(1)} \prec y_A^{(2)} \prec \cdots \prec y_A^{(k)}$, each with finite support with respect to $\{e_i\}_{i = 1}^\infty$ such that
    \[
    \norm{y_A^{(i)}} \leq \omega_f(1)
    \]
    and
    \[
    \norm{f(A) - (y + y_A^{(1)} + y_A^{(2)} + \cdots + y_A^{(k)})} < \epsilon.
    \]
\end{theorem}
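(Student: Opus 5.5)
We prove the statement by induction on $k$, combining weak compactness in the reflexive space $Y$ with Ramsey's theorem for $[\mathbb{M}]^k$. Throughout, $f$ is Lipschitz on a graph, so $f$ is bounded on bounded sets.

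\textbf{Base case.} For $k=1$, the points $f(\{n\})$, $n\in\mathbb{M}$, lie in a ball of radius $\operatorname{Lip}(f)$ about $f(\{m_0\})$. By reflexivity we pass to $\mathbb{M}_0\in[\mathbb{M}]^\omega$ on which $f(\{n\})\rightharpoonup y$ weakly. Since the basis is bimonotone, the projections $P_r$ are norm one. Weak convergence then gives $P_r f(\{n\})\to P_r y$ in norm, and weak lower semicontinuity gives $\norm{f(\{n\})-y}\le\liminf_m\norm{f(\{n\})-f(\{m\})}\le\omega_f(1)$. Set $z_n=f(\{n\})-y$. These vectors tend weakly to $0$, so after a further pass to a subsequence they are small perturbations of block vectors. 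More precisely, we can choose finitely supported $y_{\{n\}}^{(1)}$ with $r\preceq y_{\{n\}}^{(1)}$ and $\norm{z_n-y^{(1)}_{\{n\}}}<\epsilon$, obtained by projecting $z_n$ onto an interval $[r,N_n]$. Bimonotonicity gives $\norm{y^{(1)}_{\{n\}}}\le\norm{z_n}\le\omega_f(1)$.

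\textbf{Inductive step.} Assume the statement for $k-1$, with all tolerances $\epsilon 2^{-j}$. Fix $A'\in[\mathbb{M}]^{k-1}$. The map $n\mapsto f(A'\cup\{n\})$, defined for $n>\max A'$, is bounded, so it has a weak limit along a subsequence. A diagonal argument over the countably many $A'$ produces a single $\mathbb{M}_1$ such that
\[
g(A')=\text{w-}\lim_{n\in\mathbb{M}_1} f(A'\cup\{n\})
\]
exists for every $A'\in[\mathbb{M}_1]^{k-1}$. The map $g$ is again Lipschitz with $\omega_g(1)\le\omega_f(1)$. For $d_H$ this follows from weak lower semicontinuity, because $A'\cup\{n\}$ and $B'\cup\{n\}$ are at the same Hamming or Johnson distance as $A'$ and $B'$. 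Applying the inductive hypothesis to $g$ yields $y$ and blocks $y^{(1)}_{A'}\prec\cdots\prec y^{(k-1)}_{A'}$ approximating $g(A')-y$.

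It remains to produce the last block. As in the base case, the increment $f(A'\cup\{n\})-g(A')$ is weakly null in $n$, and its norm is at most $\omega_f(1)$ by lower semicontinuity. For $n$ large relative to $A'$, this increment is, up to a small error, supported beyond the supports of $y^{(i)}_{A'}$ and of $y$. We take $y^{(k)}_{A}$ to be the projection of the increment onto a suitable tail interval. Bimonotonicity again keeps $\norm{y^{(k)}_A}\le\omega_f(1)$, and we use $\norm{P_{[a,b]}}\le1$ on intervals.

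\textbf{Main obstacle.} The key difficulty is making "$n$ large relative to $A'$" uniform, so that it holds for all $A=A'\cup\{n\}$ in a single infinite set $\mathbb{M}'$. We handle this by a diagonal construction. Enumerate $\mathbb{M}'=\{m_1<m_2<\cdots\}$, and choose $m_{j+1}$ so that for every one of the finitely many $A'\subseteq\{m_1,\dots,m_j\}$, the tail estimate for $f(A'\cup\{n\})$ holds with error $\epsilon 2^{-j}$ for all $n\ge m_{j+1}$.

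A single element $n$ does not serve every $A'$ at once unless the errors are also controlled for larger $n$. We secure this by using norm convergence of $P_N f(A'\cup\{n\})$ as $n\to\infty$, which holds for each fixed $N$ by the finite-dimensionality of $P_N$ applied to weak convergence. Fix $N$ so that the earlier blocks and $y$ are essentially supported in $[1,N]$. Then for all sufficiently large $n$, the vector $P_N(f(A'\cup\{n\}))$ is close to $P_N g(A')$. We expect the bookkeeping of these errors to be the hardest part of the argument.
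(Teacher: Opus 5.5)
Your proposal is correct. It is essentially the standard argument of Baudier--Lancien--Schlumprecht, which the paper cites without reproving: pass to iterated weak limits $g(A')=\text{w-}\lim_{n} f(A'\cup\{n\})$, bound each weakly null increment $f(A'\cup\{n\})-g(A')$ by $\omega_f(1)$ via weak lower semicontinuity (the two sets differ in one coordinate), and cut the increments into successive finitely supported blocks, using norm convergence of the finite-rank projections $P_N$ and a diagonal choice of $m_{j+1}$ over the finitely many $A'\subseteq\{m_1,\dots,m_j\}$, with norm control from bimonotonicity. One point is superfluous: asking the last block to lie beyond the (essential) support of $y$ is unnecessary, since the statement imposes no support condition on $y$, so only the index $r$ and the supports of $y^{(1)}_{A'},\dots,y^{(k-1)}_{A'}$ need to be cleared.
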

We remark that $\mathcal{T}^*$ is a reflexive Banach space with bimonotone basis, so we may apply Theorem \ref{thm4.3} with $Y = \mathcal{T}^*$.
\begin{theorem}[Theorem 4.4, \cite{Baudier_2018}]
    \label{thm4.4}
    Let $r \in \mathbb{N}$ and let $f: ([\mathbb{N}]^r, d_\bullet) \to \mathcal{T}^*$ be Lipschitz, where $\mathcal{T}^*$ denotes Tsirelson's space. Then there exists $\mathbb{M}' \in [\mathbb{N}]^\omega$ such that for all $A, B \in [\mathbb{M}']^r$,
    \[
    \norm{f(A) - f(B)} \leq 5\omega_f(1).
    \]
\end{theorem}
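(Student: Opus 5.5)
The plan is to combine the Ramsey-type decomposition of Theorem \ref{thm4.3} with the upper estimate that Tsirelson's space satisfies on admissible block sequences. Recall that $\mathcal{T}^*$ is dual to the space $\mathcal{T}$, whose norm satisfies the lower estimate $\norm{\sum_{j} P_{I_j}x}_{\mathcal{T}} \ge \frac12 \sum_j \norm{P_{I_j}x}_{\mathcal{T}}$ for admissible families. By duality, whenever $m \preceq z_1 \prec z_2 \prec \cdots \prec z_m$ in $\mathcal{T}^*$, we get
\[
\norm{z_1 + \cdots + z_m}_{\mathcal{T}^*} \leq 2\max_{i} \norm{z_i}_{\mathcal{T}^*}.
\]
This is the only property of $\mathcal{T}^*$ the argument will use, together with reflexivity and the bimonotone basis needed for Theorem \ref{thm4.3}.

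\textbf{Step 1.} Fix $\epsilon>0$ with $4\epsilon \le \omega_f(1)$; if $\omega_f(1)=0$, $f$ is constant and there is nothing to prove. Apply Theorem \ref{thm4.3} with $k=r$, with the starting parameter $2r$ (so every block satisfies $2r \preceq y_A^{(1)}$), and with this $\epsilon$. This yields $\mathbb{M}'$, a vector $y$, and blocks $y_A^{(1)} \prec \cdots \prec y_A^{(r)}$ of norm at most $\omega_f(1)$ such that $f(A)$ is within $\epsilon$ of $y+\sum_i y_A^{(i)}$.

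\textbf{Step 2.} For $A,C\in[\mathbb{M}']^r$ with $\max A<\min C$, I want the whole $2r$-term sequence $y_A^{(1)},\dots,y_A^{(r)},-y_C^{(1)},\dots,-y_C^{(r)}$ to be a block sequence. Since it starts after $2r$, it is then admissible, and the estimate above gives
\[
\norm{f(A)-f(C)} \le 2\epsilon + 2\omega_f(1).
\]

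\textbf{Step 3.} For arbitrary $A,B\in[\mathbb{M}']^r$, choose $C\in[\mathbb{M}']^r$ lying beyond both. The triangle inequality through $f(C)$ then gives
\[
\norm{f(A)-f(B)} \le 4\omega_f(1)+4\epsilon \le 5\omega_f(1).
\]

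\textbf{Main obstacle.} The difficulty is Step 2: Theorem \ref{thm4.3} as stated does not say that the blocks of $A$ end before the blocks of $C$ begin when $A<C$. I would first try to strengthen the conclusion by a further diagonal thinning of $\mathbb{M}'=\{m_1<m_2<\cdots\}$. The goal is to arrange that the support of $y_A^{(i)}$ lies in an interval $[N_{a_{i-1}}, N_{a_i})$ determined by consecutive elements of $A$. The heuristic is that $y+\sum_{j\le i}y_A^{(j)}$ is essentially a weak limit depending only on $a_1,\dots,a_i$; this follows from reflexivity and the Lipschitz condition in $d_H$ or $d_J$, since changing later coordinates moves $f$ by a bounded amount. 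By small perturbation and truncation with the bimonotone projections, the tail pieces can be pushed past any prescribed integer.

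Concretely, I would choose $N_{m_j}$ inductively, truncating the finitely many relevant approximations at each stage and absorbing the error into $\epsilon$. If this localization is achieved, then $\max A<\min C$ forces every block of $A$ to precede every block of $C$, and Step 2 goes through.
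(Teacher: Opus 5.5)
\textbf{Step 2 has a genuine gap.} Theorem \ref{thm4.3} says nothing about how the blocks attached to two different sets $A$ and $C$ sit relative to each other. So the joint admissibility of $y_A^{(1)},\dots,y_A^{(r)},-y_C^{(1)},\dots,-y_C^{(r)}$ is an extra claim, and you only give a heuristic for it.

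Moreover, the localization you propose would not suffice even if you proved it. Suppose $\operatorname{supp}(y_A^{(i)})\subseteq[N_{a_{i-1}},N_{a_i})$ with $N_{a_0}$ a fixed starting index. Then $y_C^{(1)}$ is only known to lie in $[N_{a_0},N_{c_1})$, so it may overlap the blocks of $A$. To force $y_A^{(r)}\prec y_C^{(1)}$, each $y_A^{(i)}$ would have to start after $N_{a_i^-}$, where $a_i^-$ is the predecessor of $a_i$ in $\mathbb{M}'$. Getting that requires a real gliding-hump argument through the iterated weak limits inside the proof of Theorem \ref{thm4.3}. It does not follow from the statement of that theorem.

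\textbf{You do not need Step 2 at all.} The vector $y$ from Theorem \ref{thm4.3} is a common center for the whole set $f([\mathbb{M}']^r)$. Keep your Step 1; starting parameter $r$ already suffices, and $2r$ is harmless.

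For each single $A\in[\mathbb{M}']^r$, the $r$ blocks $r\preceq y_A^{(1)}\prec\cdots\prec y_A^{(r)}$ are already admissible on their own. So your upper estimate for $\mathcal{T}^*$ gives $\norm{y_A^{(1)}+\cdots+y_A^{(r)}}\le 2\omega_f(1)$, and hence $\norm{f(A)-y}<2\omega_f(1)+\epsilon$.

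The triangle inequality through $y$ then gives $\norm{f(A)-f(B)}<4\omega_f(1)+2\epsilon\le 5\omega_f(1)$ for all $A,B\in[\mathbb{M}']^r$, once $\epsilon\le\omega_f(1)/2$. No relation between the supports of different sets is required.

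This is the argument behind the cited theorem. The paper itself does not reprove the statement, but it performs exactly this computation, via (2.13) of \cite{Baudier_2018}, in its proof that $\mathcal{CR}(\ell_p,\mathcal{T}^*)\le 4$. Your auxiliary set $C$ plays the role of $y$, but it demands the unproved joint admissibility and still gives the same leading constant $4$. Replace Steps 2 and 3 with this two-line estimate.
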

Using these results, we can now establish two corresponding upper bounds on coarse embeddability ratios into Tsirelson's space.
\begin{samepage}
\begin{proposition}
    Let $\mathcal{T}^*$ be Tsirelson's space and let $0 < p < \infty$. Then $\mathcal{CR}(\ell_p, \mathcal{T}^*) \leq 4$.
\end{proposition}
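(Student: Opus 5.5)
The plan is to argue by contradiction, using Lemma \ref{lemma2.4} together with the Ramsey-type decomposition of Theorem \ref{thm4.3}, applied to a suitable embedding of the Hamming (or Johnson) graph into $\ell_p$. Theorem \ref{thm4.4} would only give the constant $5$. To reach $4$ I will use the decomposition directly, together with the upper $\ell_\infty$-type block estimate in $\mathcal{T}^*$.

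\textbf{Setup.} Suppose $\phi:\ell_p\to\mathcal{T}^*$ has $\mathcal{R}(\phi)>4$. After rescaling as in the Note following Lemma \ref{lemma2.4}, there are $\Delta>0$ and $\delta>4$ such that:
\begin{itemize}
\item $\norm{x-y}\le 1$ implies $\norm{\phi(x)-\phi(y)}\le 1$;
\item $\norm{x-y}\ge\Delta$ implies $\norm{\phi(x)-\phi(y)}\ge\delta$.
\end{itemize}
For $0<p<1$, $\norm{\cdot}$ on $\ell_p$ is read as the quasi-norm, or its $p$-th power as a metric. Either convention works the same way below, since only monotone scaling in $r$ matters.

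\textbf{The graph map.} Define $f:[\mathbb{N}]^r\to\ell_p$ by $f(A)=c\sum_{i\in A}e_i$, with $c$ chosen so that sets at Hamming (or Johnson) distance $1$ go to points at distance at most $1$. Their difference has two nonzero coordinates, so $c=2^{-1/p}$ works. Then $g=\phi\circ f$ satisfies $\omega_g(1)\le 1$. Since graph distance $1$ steps chain together, $g$ is Lipschitz on $([\mathbb{N}]^r,d_\bullet)$. If $\max A<\min B$, then $\norm{f(A)-f(B)}=c(2r)^{1/p}=r^{1/p}$, so I fix $r$ large enough that $r^{1/p}\ge\Delta$. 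Hence $\norm{g(A)-g(B)}\ge\delta$ for all such pairs.

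\textbf{Decomposition in $\mathcal{T}^*$.} Apply Theorem \ref{thm4.3} to $g$ with $k=r$, the "$r$" of that theorem also equal to $r$, and a small $\epsilon>0$. This gives $\mathbb{M}'\in[\mathbb{N}]^\omega$ and $y\in\mathcal{T}^*$ such that for each $A\in[\mathbb{M}']^r$:
\begin{itemize}
\item $g(A)$ is within $\epsilon$ of $y+\sum_{i=1}^r y_A^{(i)}$;
\item the blocks satisfy $r\preceq y_A^{(1)}\prec\cdots\prec y_A^{(r)}$;
\item each block has $\norm{y_A^{(i)}}\le 1$.
\end{itemize}

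\textbf{Key estimate (the main step).} I need the standard upper estimate in $\mathcal{T}^*$: if $r\preceq z_1\prec\cdots\prec z_r$, then
\[
\norm{\textstyle\sum_{i=1}^r z_i}\le 2\max_i\norm{z_i}.
\]
This is the dual of the lower estimate $\norm{\sum x_i}_{\mathcal{T}}\ge\frac12\sum\norm{x_i}_{\mathcal{T}}$, which holds for admissible block sequences and follows directly from the definition of the norm on $\mathcal{T}$.

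To verify the duality, take a norming functional $x\in\mathcal{T}$ for $\sum z_i$. Split $x$ into blocks $x_i=P_{I_i}x$, where $I_i$ is the support interval of $z_i$. Bimonotonicity lets us take these intervals to be consecutive, and they are admissible because $r\le\min I_1$. Then
\[
\Big\langle \sum_i z_i, x\Big\rangle=\sum_i\langle z_i,x_i\rangle\le\max_i\norm{z_i}\sum_i\norm{x_i}\le 2\max_i\norm{z_i}\norm{x}.
\]
I expect the only delicate point to be the bookkeeping needed to make the intervals admissible.

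\textbf{Conclusion.} Take any $A,B\in[\mathbb{M}']^r$ with $\max A<\min B$. The common vector $y$ cancels, so
\[
\delta\le\norm{g(A)-g(B)}\le\norm{\textstyle\sum_i y_A^{(i)}}+\norm{\textstyle\sum_i y_B^{(i)}}+2\epsilon\le 2+2+2\epsilon.
\]
Letting $\epsilon\to0$ gives $\delta\le 4$, which contradicts $\delta>4$. Hence $\mathcal{CR}(\ell_p,\mathcal{T}^*)\le 4$.
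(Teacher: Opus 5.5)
Your proof is correct and follows essentially the same route as the paper: compose $\phi$ with the (rescaled) indicator-sum map on $[\mathbb{N}]^r$, apply Theorem \ref{thm4.3}, and bound each block sum $\norm{y_A^{(1)}+\cdots+y_A^{(r)}}$ by $2$ using the upper estimate (2.13) of Baudier--Lancien--Schlumprecht. The paper simply cites that estimate, whereas you derive it by duality from the lower estimate built into the norm of $\mathcal{T}$. One bookkeeping point: the paper's definition of admissibility is $m<\min I_k$, so you should apply Theorem \ref{thm4.3} with parameter $r+1$, which gives $r<\min\operatorname{supp}(y_A^{(1)})$ and makes your duality argument go through exactly.
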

\begin{proof}
    Suppose $\phi: \ell_p \to \mathcal{T}^*$ with $R(\phi) > 4$. By Lemma \ref{lemma2.4}, we may obtain $\Delta > 0$ and $\delta > 4$ such that for all $x, y \in \ell_p$,
    \[
    \norm{x - y} \geq \Delta \Rightarrow \norm{\phi(x) - \phi(y)} \geq \delta
    \]
    and
    \[
    \norm{x - y} \leq 2^{1/p} \Rightarrow \norm{\phi(x) - \phi(y)} \leq 1.
    \]
    Let $\{e_n\}_{n = 1}^\infty$ be the sequence of standard unit vectors in $\ell_p$ and set $\epsilon = \frac{\delta - 4}{2}$. Let $r \in \mathbb{N}$ such that $(2r)^{1/p} \geq \Delta$. Define $f:J_r(\mathbb{N}) \to \mathcal{T}^*$ by
    \[
    f(A) = \phi\left(\sum\limits_{n \in A} e_n\right)
    \]
    for all $A \in [\mathbb{N}]^r$.\\\\
    Suppose $d_J(A, B) = 1$. Then $\norm{\sum\limits_{n \in A} e_n - \sum\limits_{n \in B} e_n} = 2^{1/p}$, so $\norm{f(A) - f(B)} \leq 1$. Thus, $f$ is Lipschitz.\\\\
    Applying Theorem \ref{thm4.3} with $\mathbb{M} = \mathbb{N}$, we may obtain $\mathbb{M}' \in [\mathbb{N}]^\omega$ and $y \in \mathcal{T}^*$. Take $A, B \in [\mathbb{M}']^r$. Then
    \begin{equation*}
        \begin{aligned}
            \norm{f(A) - f(B)} &< \norm{y_A^{(1)} + \cdots + y_A^{(r)}} + \norm{y_B^{(1)} + \cdots + y_B^{(r)}} + 2\epsilon\\
            &\leq 4 + 2\epsilon\\
            & = \delta,
        \end{aligned}
    \end{equation*}
    where the last inequality is justified by the fact $\norm{y_C^{(1)} + ... + y_C^{(r)}} \leq 2$ for all $C \in [\mathbb{M}']^r$, a consequence of (2.13) in \cite{Baudier_2018}.
    Now, suppose $A, B \in [\mathbb{M}']^r$ are disjoint. Then
    \[
    \norm{\sum\limits_{n \in A} e_n - \sum\limits_{n \in B} e_n} = (2r)^{1/p} \geq \Delta,
    \]
    so
    \[
    \norm{f(A) - f(B)} \geq \delta,
    \]
    a contradiction. Thus, $\mathcal{R}(\phi) \leq 4$, so $\mathcal{CR}(\ell_p, \mathcal{T}^*) \leq 4$.
\end{proof}
\end{samepage}
\begin{proposition}
    Let $X$ be a nonreflexive Banach space and let $\mathcal{T}^*$ denote Tsirelson's space. Then $\mathcal{CR}(X, \mathcal{T}^*) \leq 5$. In particular, we may take $X = c_0$ or $X = \ell_\infty$.
\end{proposition}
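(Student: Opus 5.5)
The plan is to argue by contradiction, as in the $\ell_p$ case, but now using Theorem \ref{thm4.4} together with a graph embedding of $([\mathbb{N}]^r, d_H)$ into $X$ that is available exactly because $X$ is nonreflexive.

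Suppose $\phi: X \to \mathcal{T}^*$ has $\mathcal{R}(\phi) > 5$. By Lemma \ref{lemma2.4} and the rescaling remark, we can find $\Delta > 0$ and $\delta > 5$ such that
\[
\norm{x - y} \geq \Delta \Rightarrow \norm{\phi(x) - \phi(y)} \geq \delta
\qquad\text{and}\qquad
\norm{x - y} \leq 1 \Rightarrow \norm{\phi(x) - \phi(y)} \leq 1.
\]

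The key input is the James-type characterization of nonreflexivity. Fix $0<\theta<1$. There is a sequence $(x_n)$ in $B_X$ and functionals $(x_n^*)$ in $B_{X^*}$ with $x_i^*(x_j) = \theta$ for $i \leq j$ and $x_i^*(x_j) = 0$ for $i > j$. From it we form the summing sequence $z_n = x_n - x_{n-1}$ (with $x_0 = 0$), or directly
\[
g(A) = \sum_{i=1}^r x_{a_i} \quad\text{for } A=\{a_1<\cdots<a_r\}.
\]
This is the standard map used by Baudier et al.\ to show nonreflexive spaces contain the Hamming graphs with uniformly bounded distortion. For the upper bound, if $d_H(A,B)=1$ then $g(A)-g(B) = x_{a_i}-x_{b_i}$, so $\norm{g(A)-g(B)}\le 2$. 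Hence after rescaling, $\omega_g(1)\le 1$ with $g$ replaced by $g/2$.

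For the lower bound, take $A<B$ with $\max A<\min B$. Evaluating suitable functionals should give $\norm{g(A)-g(B)}\ge c\,r\theta$ for an absolute constant $c$. I would first try $x^*_{a_1}$ combined with the positions of $B$. A more likely route is a single functional, or a sign-alternating combination, of norm at most $2$ that detects all $r$ coordinates. Alternatively I would use the known inequality $\norm{\sum_{i}\pm(x_{a_i}-x_{b_i})}\gtrsim \theta r$ for interlacing or disjoint pairs, as in \cite{Baudier_2018}.

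Getting this linear lower bound in $r$ is the main obstacle. If it fails, I would instead apply the Ramsey principle through $f(A)=\phi(g(A)/2)$ on a subsequence where $\norm{g(A)-g(B)}$ grows with $r$. Only this growth is needed: we choose $r$ so large that disjoint $A<B$ satisfy $\norm{g(A)-g(B)}/2\ge \Delta$.

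With these choices, define $f = \phi\circ(g/2): ([\mathbb{N}]^r,d_H)\to\mathcal{T}^*$. Since $d_H$-adjacent points map to points at distance at most $1$, we get $\omega_f(1)\le 1$, so $f$ is Lipschitz. Theorem \ref{thm4.4} then yields $\mathbb{M}'\in[\mathbb{N}]^\omega$ with $\norm{f(A)-f(B)}\le 5$ for all $A,B\in[\mathbb{M}']^r$. Now choose $A,B\in[\mathbb{M}']^r$ with $\max A<\min B$. By the lower bound, $\norm{g(A)/2-g(B)/2}\ge\Delta$, so $\norm{f(A)-f(B)}\ge\delta>5$, a contradiction.

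Hence $\mathcal{R}(\phi)\le 5$ for every $\phi$, and so $\mathcal{CR}(X,\mathcal{T}^*)\le 5$. Since $c_0$ and $\ell_\infty$ are nonreflexive, they are included.
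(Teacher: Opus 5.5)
Your architecture is exactly the paper's. You rescale via Lemma \ref{lemma2.4}, map the Hamming graph into $X$ by $A \mapsto \sum_{n \in A} x_n$ for a James sequence, and compose with $\phi$. You then play the bound $5\omega_f(1)$ from Theorem \ref{thm4.4} against the compression $\delta > 5$ on a pair $A<B$ in $[\mathbb{M}']^r$. Your bound $\norm{g(A)-g(B)} \le 2$ for Hamming neighbours and the rescaling by $1/2$ are fine.

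\textbf{The gap.} The step you leave open is the only place nonreflexivity is used: the linear lower bound $\norm{g(A)-g(B)} \gtrsim r$ for $\max A < \min B$. The proposal does not establish it. The first functional you suggest gives nothing: $x^*_{a_1}$ takes the value $\theta$ on every $x_{a_i}$ and on every $x_{b_j}$, so $x^*_{a_1}(g(A)-g(B)) = 0$. The fallback about passing to a subsequence where distances grow is not an argument.

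\textbf{The fix.} It follows in one line from the biorthogonality pattern you wrote down: use $x^*_{b_1}$ instead. Since $b_1 > a_i$ for every $i$, we get $x^*_{b_1}(x_{a_i}) = 0$. Since $b_1 \le b_j$ for every $j$, we get $x^*_{b_1}(x_{b_j}) = \theta$. Hence
\[
\norm{g(B)-g(A)} \geq x^*_{b_1}\bigl(g(B)-g(A)\bigr) = r\theta .
\]
Choosing $r \geq 2\Delta/\theta$ then gives $\norm{g(A)/2 - g(B)/2} \geq \Delta$, and your argument closes. The paper skips this computation by quoting James's characterization directly in the form $\norm{\sum_{i=1}^r x_{a_i} - \sum_{i=r+1}^{2r} x_{a_i}} \geq r/2$ for $a_1 < \cdots < a_{2r}$. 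That is the same estimate with $\theta = 1/2$.
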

\begin{proof}
    Suppose $\phi:X \to \mathcal{T}^*$ with $\mathcal{R}(\phi) > 5$. By precomposing with dilations, we may apply Lemma \ref{lemma2.4} to obtain $\Delta > 0$ and $\delta > 5$ such that for all $x, y \in X$,
    \[
    \norm{x - y} \geq \Delta \Rightarrow \norm{\phi(x) - \phi(y)} \geq \delta
    \]
    and
    \[
    \norm{x - y} \leq 2 \Rightarrow \norm{\phi(x) - \phi(y)} \leq 1.
    \]
    By a characterization of reflexive spaces by James~\cite{James1972SOMESP}, there exists a sequence $\{x_i\}_{i = 1}^\infty$ in $B_X$ such that for all $r \in \mathbb{N}$ and $A = \{a_1, ..., a_{2r}\} \in [\mathbb{N}]^{2r}$ (written in increasing order),
    \[
    \norm{\sum\limits_{i = 1}^r x_{a_i} - \sum\limits_{i = r + 1}^{2r} x_{a_i}} \geq \frac{r}{2}.
    \]
    Let $r \geq 2\Delta$. Define $s:H_r(\mathbb{N}) \to X$ by $s(A) = \sum\limits_{n \in A} x_n$ for all $A \in H_r(\mathbb{N})$ and define $f:H_r(\mathbb{N}) \to \mathcal{T}^*$ by
    \[
    f(A) = (\phi \circ s)(A) = \phi\left(\sum\limits_{n \in A} x_n\right)
    \]
    for all $A \in H_r(\mathbb{N})$. It is clear that the map $s$ is $2$-Lipschitz and $f$ is $\omega_{f}(1)$-Lipschitz. Since
    \[
    \omega_{f}(1) = \omega_{\phi \circ s}(1) \leq \omega_{\phi}(2),
    \]
    we see $\lip(f) \leq \omega_{\phi}(2)$. Applying Theorem \ref{thm4.4}, we obtain some $\mathbb{M}' \in [\mathbb{N}]^\omega$ such that for all $A, B \in [\mathbb{M}']^r$,
    \[
    \norm{f(A) - f(B)} \leq 5\omega_f(1) \leq 5\omega_{\phi}(2) \leq 5 < \delta.
    \]
    Now, let $A, B \in [\mathbb{M}']^r$ with $\max(A) < \min(B)$. Applying James' characterization, we see
    \[
    \norm{\sum\limits_{n \in A} x_n - \sum\limits_{n \in B} x_n} \geq \frac{r}{2} \geq \Delta.
    \]
    Thus,
    \[
    \norm{f(A) - f(B)} \geq \norm{\phi\left(\sum\limits_{n \in A} x_n\right) - \phi\left(\sum\limits_{n \in B} x_n\right)} \geq \delta,
    \]
    a contradiction. Thus, $\mathcal{CR}(X, \mathcal{T}^*) \leq 5$.
\end{proof}

%% file: 05_metric_cotype.tex
\section{Coarse Embeddability Ratios and Metric Cotype}
Our last example concerns embeddability ratios between $L_p(\mu)$ spaces. Our results rely on the machinery of metric cotype developed by Mendel and Naor in \cite{mendel2008metric}. Throughout this section, when we speak of a $L_p(X, \Sigma, \mu)$ space (generally abbreviated $L_p(\mu)$ or simply $L_p$), we assume $(X, \Sigma, \mu)$ is not a purely atomic measure space consisting of a finite number of atoms. This ensures our $L_p$ spaces are always infinite-dimensional. As mentioned earlier, the finite-dimensional cases are handled by Proposition \ref{prop2.8} and the remark following it.

In the linear theory of Banach spaces, two important invariants preserved by linear embedding are Rademacher type and cotype, which we define now.
\begin{definition}[Rademacher type]
    Let $X$ be a Banach space and let $p > 0$. The space $X$ is said to have Rademacher type $p$ if there exists a constant $T < \infty$ such that for all $n \in \mathbb{N}$ and $x_1, ..., x_n$,
    \[
    \mathbb{E}_\sigma\norm{\sum\limits_{j = 1}^n \sigma_j x_j}^p \leq T^p \sum\limits_{j = 1}^n \norm{x_j}^p,
    \]
    where expectation is taken over uniformly chosen $\sigma \in \{-1, 1\}^n$.
\end{definition}
\begin{definition}[Rademacher Cotype]
    Let $X$ be a Banach space and let $q > 0$. The space $X$ is said to have Rademacher cotype $q$ if there exists a constant $C < \infty$ such that for all $n \in \mathbb{N}$ and $x_1, ..., x_n \in X$,
    \[
    \mathbb{E}_\sigma\norm{\sum\limits_{j = 1}^n \sigma_j x_j}^q \geq \frac{1}{C^q}\sum\limits_{j = 1}^n \norm{x_j}^q,
    \]
    where the expectation is taken over uniformly chosen $\sigma \in \{-1, 1\}^n$. The infimum over all constants $C > 0$ for which the above inequality holds is denoted $C_q(X)$.
    For the sake of duality, it is convenient to say every Banach space has cotype $\infty$.
\end{definition}
Rademacher type and cotype generalize the Pythagorean identity probabilistically. We note that any Hilbert space has both Rademacher type and cotype $2$ with constants $T = C = 1$. Indeed, since $\mathbb{E}[\sigma_i\sigma_j] = 0$ for $i \neq j$ and $\mathbb{E}[\sigma_i^2] = 1$ for all $1 \leq i, j \leq n$, we see
\[
\mathbb{E}_\sigma\norm{\sum\limits_{j = 1}^n \sigma_j x_j}^2 = \mathbb{E}_\sigma\left\langle \sum\limits_{j = 1}^n \sigma_j x_j, \sum\limits_{j = 1}^n \sigma_j x_j\right\rangle = \sum\limits_{i, j = 1}^n x_i x_j \mathbb{E}[\sigma_i\sigma_j] = \sum\limits_{i = 1}^n \norm{x_i}^2.
\]
In fact, Kwapie\'{n}'s theorem (Proposition 3.1 \cite{kwapien1972isomorphic}) states any Banach space with Rademacher type and cotype $2$ is a Hilbert space. 

Additionally, it can be shown that every Banach space has type $1$ (this is just the triangle inequality) and cannot have Rademacher type greater than 2 or Rademacher cotype less than 2 (Remark 6.2.11(a) ~\cite{albiac2016topics}). Thus, we say a Banach space has nontrivial type if it has Rademacher type greater than $1$ and nontrivial cotype if it has finite cotype. Note that if a Banach space has Rademacher type $p$ and cotype $q$, then it has Rademacher type $p'$ for $1 \leq p' \leq p$ and Rademacher cotype $q'$ for $q \leq q' \leq \infty$.

The following theorem establishes the Rademacher type and cotype for $L_p(\mu)$ spaces.
\begin{theorem}[Theorem 6.2.14~\cite{albiac2016topics}]
    Let $1 \leq p \leq \infty$. Then $L_p(\mu)$ has Rademacher type $\min\{p, 2\}$ and Rademacher cotype $\max\{2, p\}$.
\end{theorem}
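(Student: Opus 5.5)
The approach is to reduce everything to the scalar Khintchine inequality, applied pointwise and then integrated (Fubini), and afterwards compare exponents with the elementary inequalities for $\ell_2$ versus $\ell_p$ sums and the (reverse) triangle inequality in $L_{p/2}$.

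First I dispose of $p = \infty$. Type $1$ holds for every Banach space by the triangle inequality, and cotype $\infty$ holds for every space by the convention stated in the definition. So fix $1 \leq p < \infty$ and $x_1, \dots, x_n \in L_p(\mu)$. By Fubini,
\[
\mathbb{E}_\sigma \norm{\sum_{j=1}^n \sigma_j x_j}_p^p = \integrate{}{}{\mathbb{E}_\sigma\modulus{\sum_{j=1}^n \sigma_j x_j(t)}^p}{d\mu(t)}.
\]
Khintchine's inequality, which I take as known with constants $A_p, B_p$ depending only on $p$, applied for each fixed $t$ gives
\[
A_p^p \integrate{}{}{\Big(\sum_j |x_j|^2\Big)^{p/2}}{d\mu} \leq \mathbb{E}_\sigma \norm{\sum_{j=1}^n \sigma_j x_j}_p^p \leq B_p^p \integrate{}{}{\Big(\sum_j |x_j|^2\Big)^{p/2}}{d\mu}.
\]
The whole problem thus becomes a comparison between $S := \norm{(\sum_j |x_j|^2)^{1/2}}_p$ and $\sum_j\norm{x_j}_p^q$ for the appropriate exponent $q$.

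Next I split into four cases.

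\emph{Type $p$ when $p \leq 2$.} Pointwise $(\sum_j |x_j|^2)^{p/2} \leq \sum_j |x_j|^p$, since $\ell_p \subseteq \ell_2$ contractively. Integrating gives $S^p \leq \sum_j \norm{x_j}_p^p$, hence type $p$ with $T = B_p$.

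\emph{Cotype $p$ when $p \geq 2$.} The reverse pointwise inequality $(\sum_j |x_j|^2)^{p/2} \geq \sum_j |x_j|^p$ gives cotype $p$ with $C = 1/A_p$.

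\emph{Type $2$ when $p \geq 2$.} Here the exponents in the definition must change from $p$ to $2$. Since $p \geq 2$, Jensen/H\"older gives $\mathbb{E}\norm{\cdot}^2 \leq (\mathbb{E}\norm{\cdot}^p)^{2/p} \leq B_p^2 S^2$. Moreover $S^2 = \norm{\sum_j |x_j|^2}_{p/2} \leq \sum_j \norm{|x_j|^2}_{p/2} = \sum_j \norm{x_j}_p^2$, by the triangle inequality in $L_{p/2}$, which is valid because $p/2 \geq 1$.

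\emph{Cotype $2$ when $p \leq 2$.} This is dual to the previous case. Jensen now gives $\mathbb{E}\norm{\cdot}^2 \geq (\mathbb{E}\norm{\cdot}^p)^{2/p} \geq A_p^2 S^2$. For $0 < p/2 \leq 1$ the reverse Minkowski inequality for nonnegative functions, $\norm{\sum_j f_j}_{p/2} \geq \sum_j \norm{f_j}_{p/2}$, gives $S^2 \geq \sum_j \norm{x_j}_p^2$.

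I expect the main obstacle to be the cotype-$2$ case for $p < 2$. There the argument relies on the reverse Minkowski inequality in the quasi-normed $L_{p/2}$, which I would prove directly: by homogeneity it reduces to the concavity of $u \mapsto \norm{u}_{r}$ on nonnegative functions for $r \leq 1$. Everything else is Khintchine plus bookkeeping of exponents. The $p=1$ endpoint needs no special care, since Khintchine's lower bound holds with $A_1 = 1/\sqrt{2}$.
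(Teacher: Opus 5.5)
Your argument is correct for $1\le p<\infty$. The paper gives no proof of its own here and only cites Albiac--Kalton, and your route is the standard argument behind that result: apply Khintchine pointwise, integrate, then compare $\norm{(\sum_j\modulus{x_j}^2)^{1/2}}_p$ with $\ell_p$- or $\ell_2$-sums of the $\norm{x_j}_p$. Your use of Jensen to switch between second and $p$-th moments goes the right way in both mixed cases, so with this paper's definitions you never need Kahane's inequality. In the cotype-$2$ step, ``concavity of $u\mapsto\norm{u}_r$'' is really a restatement of reverse Minkowski rather than a reduction. It is cleaner to apply Minkowski's integral inequality in $\ell_s$, with $s=2/p\ge 1$, to $g_j=\modulus{x_j}^p$. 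That gives $\bigl(\sum_j(\int g_j\,d\mu)^s\bigr)^{1/s}\le\int\bigl(\sum_j g_j^s\bigr)^{1/s}d\mu$, which is literally $\sum_j\norm{x_j}_p^2\le S^2$.

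The genuine problem is the endpoint $p=\infty$, where you do not prove what the statement says. Since $\min\{\infty,2\}=2$, the statement asserts that $L_\infty(\mu)$ has type $2$, and you replaced this with type $1$. No proof can do better, because the assertion is false. Under the section's standing assumption, $L_\infty(\mu)$ is infinite-dimensional. So for each $n$ there are pairwise disjoint sets $A_\varepsilon$ of positive measure, indexed by $\varepsilon\in\{-1,1\}^n$. Then $x_j=\sum_\varepsilon\varepsilon_j\chi_{A_\varepsilon}$ satisfies $\norm{\sum_j a_jx_j}_\infty=\max_\varepsilon\modulus{\sum_j a_j\varepsilon_j}=\sum_j\modulus{a_j}$. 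Thus $\norm{\sum_j\sigma_jx_j}_\infty=n$ for every choice of signs while $\norm{x_j}_\infty=1$, and type $r>1$ would force $n^{r}\le T^{r}n$ for all $n$. So the theorem is only true for $1\le p<\infty$; at $p=\infty$ the correct, trivial facts are type $1$ and cotype $\infty$. You should flag this explicitly rather than silently change the exponent. It is harmless for the paper, which only invokes the result for finite exponents: cotype $q$ and nontrivial type of $L_q$, $2\le q<\infty$, in Proposition~\ref{prop5.10}.
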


Clearly, Rademacher type and cotype are \textit{linear} notions, as they rely on the addition and scalar multiplication of vectors. This presents an obstacle to directly applying them to the study of nonlinear aspects of Banach space theory. However, Rademacher type and cotype are \textit{local} notions, only relying on finitely many vectors at a time. Thus, as mentioned in the introduction, Ribe's result leads one to hope for a reformulation of these properties solely in terms of metric structure.

In \cite{enflo1978infinite}, Enflo formulated a metric notion related to work on  infinite-dimensional versions of Hilbert's fifth problem (although it was only proven recently that Enflo type coincides with Rademacher type for Banach spaces \cite{ivanisvili2020rademacher}). This was used to show that to show that if $0 < p < 1$ and $1 < q < \infty$, then $L_p(\mu)$ is not uniformly homeomorphic to $L_q(\mu)$ (Corollary of Theorem 7, \cite{enflo1978infinite}).

Mendel and Naor provided the metric formulation of cotype and proved its equivalence to Rademacher cotype for Banach spaces in \cite{mendel2008metric}. Our results rely primarily on metric cotype, which we now define.
\begin{samepage}
\begin{definition}[Metric Cotype]
    Let $(M, d)$ be a metric space and let $q > 0$. The space $(M, d)$ is said to have metric cotype $q$ with constant $\Gamma$ if for any $n \in \mathbb{N}$, there exists $m \in 2\mathbb{N}$ such that for all $f:\mathbb{Z}_m^n \to M$,
    \begin{equation}
    \label{eq:metric_cotype}
    \tag{$\ast$}
    \sum\limits_{j = 1}^n \mathbb{E}_x\left[d\left(f\left(x + \frac{m}{2}e_j\right), f(x)\right)^q\right] \leq \Gamma^q m^q \mathbb{E}_{\epsilon, x}[d(f(x + \epsilon), f(x))^q],
    \end{equation}
    where the expectations are taken with respect to uniformly chosen $x \in \mathbb{Z}_m^n$, $\epsilon \in \{-1, 0, 1\}^n$, and $\{e_j\}_{j = 1}^n$ represent the standard unit vectors.\\\\
    For $n \in \mathbb{N}$ and $m \in 2\mathbb{N}$, we denote by $\Gamma_q(M; n, m)$ the infimum over all $\Gamma > 0$ for which \cref*{eq:metric_cotype} holds for all $f:\mathbb{Z}_m^n \to M$.\\\\
    Additionally, for $n \in \mathbb{N}$ and $\Gamma > 0$, we denote by $m_q(M; n, \Gamma)$ the smallest even natural number such that \cref*{eq:metric_cotype} holds for all $f:\mathbb{Z}_m^n \to M$.\\\\
    Finally, as before, for the sake of duality, it is convenient to say every metric space has metric cotype $\infty$.
\end{definition}
\end{samepage}
Many of the results of Mendel and Naor~\cite{mendel2008metric} rely on Fourier analysis on the discrete tori $\mathbb{Z}_m^n$. The following definitions all contain important notions which will be used to establish bounds on the coarse embeddability ratio between $L_p(\mu)$ spaces.
\begin{definition}[Walsh Functions, Rademacher Projections, and $K$-convexity]
    Let $X$ be a Banach space and let $n \in \mathbb{N}$. For $A \subseteq \{1, ..., n\}$, the Walsh function $W_A: \{-1, 1\}^n \to \{-1,1\}$ is given by 
    \[
    W_A(\sigma_1, ..., \sigma_n) = \prod\limits_{j \in A} \sigma_j.
    \]
    Using discrete Fourier analysis, we see every $f: \{-1, 1\}^n \to X$ can be written
    \[
    f(\sigma_1, ..., \sigma_n) = \sum\limits_{A \subseteq \{1, ..., n\}} \widehat{f}(A) W_A(\sigma_1, ..., \sigma_n),
    \]
    where
    \[
    \widehat{f}(A) = \mathbb{E}_\sigma\big[f(\sigma)W_A(\sigma)\big],
    \]
    and the expectation is taken with respect to uniformly chosen $\sigma \in \{-1, 1\}^n$. Now, for a given function $f: \{-1, 1\}^n \to X$, the Rademacher projection of $f$ is given by
    \[
    \mathbf{Rad}(f) = \sum\limits_{j = 1}^n \widehat{f}(\{j\}) W_{\{j\}}.
    \]
    For $p \geq 1$, the $K_p$-convexity constant of $X$, denoted $K_p(X)$, is the smallest constant $K$ such that for every $n \in \mathbb{N}$ and every $f:\{-1, 1\}^n \to X$,
    \[
    \mathbb{E}_\sigma\norm{\mathbf{Rad}(f)(\sigma)}^p \leq K^p\mathbb{E}_\sigma\norm{f(\sigma)}^p,
    \]
    where expectations are taken with respect to uniformly chosen $\sigma \in \{-1, 1\}^n$. We say $X$ is $K$-convex if $K_2(X) < \infty$.
\end{definition}
\begin{remark}
    Due to Kahane's inequality (see remarks preceding Theorem 2.1 in \cite{mendel2008metric}), if $p > 1$ and $X$ is $K$-convex, then $K_p(X) < \infty$.
\end{remark}
When establishing upper bounds on the coarse embeddability ratio between $L_p(\mu)$ spaces, we will also consider the case $0 < p < 1$.
\begin{definition}[$L_p$ spaces, $0 < p < 1$]
    Let $(X, \Sigma, \mu)$ be a measure space and let $0 < p < 1$. For a Borel measurable function $f:X \to \mathbb{R}$, we define
    \[
    \norm{f}_p = \left(\integrate{X}{}{\modulus{f}^p}{d\mu}\right)^{1/p}.
    \]
    From this, we define $L_p(X, \Sigma, \mu)$ to be the quasi-normed space of all Borel measurable functions $f:X \to \mathbb{R}$ such that $\norm{f}_p < \infty$, modulo equivalence $\mu$-almost everywhere, equipped with the quasi-norm $\norm{\,\cdot\,}_p$.
\end{definition}
The above definition is the same as the definitions traditionally given for $1 \leq p < \infty$. The key difference here is that $\norm{\,\cdot\,}_p$ is no longer a norm for $0 < p < 1$ and instead a quasi-norm. Furthermore, for $0 < p < 1$, we have that for all $f, g \in L_p(X, \Sigma, \mu)$
\[
\norm{f + g}_p^p \leq \norm{f}_p^p + \norm{g}_p^p.
\]
Thus, by setting $d(f, g) = \norm{f - g}_p^p$ for all $f, g \in L_p(X, \Sigma, \mu)$, we obtain a metric on $L_p(X, \Sigma, \mu)$, which induces a metric topology. This metric topology is taken as the standard topology on $L_p(X, \Sigma, \mu)$ for $0 < p < 1$.

For $0 < p < 1$, the space $L_p(X, \Sigma, \mu)$ is complete with the quasi-norm $\norm{\,\cdot\,}_p$, but the lack of convexity makes analysis of $L_p(\mu)$ spaces much more difficult compared to the case $1 \leq p \leq \infty$.

For $0 < p < 1$, since $L_p(\mu)$ with $\norm{\,\cdot\,}_p$ is not a metric space, we cannot directly analyze them using metric cotype, but bounds obtained using metric cotype will still be used by considering $\norm{\,\cdot\,}_p^{p/2}$ (in fact, we will use this technique for $0 < p < 2$, since metric cotype has a lower bound at $2$).

The following three results are essential in establishing our upper bound on the coarse embeddability ratio between $L_p(\mu)$ spaces. They all concern specific quantitative bounds on constants appearing in the definition of metric cotype.
\begin{lemma}[Lemma 2.3~\cite{mendel2008metric}]
    \label{lemma5.6}
    Let $(M, d)$ be a metric space with at least two points. Then for all $n \in \mathbb{N}$, $\Gamma > 0$, and $q > 0$,
    \[
    m_q(M; n, \Gamma) \geq \frac{n^{1/q}}{\Gamma}.
    \]
\end{lemma}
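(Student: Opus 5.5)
We need to show that if $m<n^{1/q}/\Gamma$, then inequality \cref*{eq:metric_cotype} fails for some $f:\mathbb{Z}_m^n \to M$. The plan is to test against a single explicit two-valued function. Fix two distinct points $a,b\in M$ and set $D=d(a,b)>0$.

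Define $f:\mathbb{Z}_m^n\to M$ by
\[
f(x)=\begin{cases} a & \text{if } x_1\in\{0,1,\dots,\tfrac m2-1\},\\ b & \text{otherwise.}\end{cases}
\]
This $f$ depends only on the first coordinate.

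\emph{Left-hand side.} Shifting $x_1$ by $m/2$ always moves it from one half of $\mathbb{Z}_m$ to the other. So $d(f(x+\tfrac m2 e_1),f(x))=D$ for every $x$. The terms with $j\ge 2$ vanish, so the left side of \cref*{eq:metric_cotype} equals $D^q$.

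\emph{Right-hand side.} The value $f(x+\epsilon)$ differs from $f(x)$ only if $x_1+\epsilon_1$ crosses one of the two boundaries of the half-interval. For $\epsilon_1=\pm1$ there are exactly two values of $x_1$ for which this happens; for $\epsilon_1=0$ there are none. Hence
\[
\mathbb{E}_{\epsilon,x}\big[d(f(x+\epsilon),f(x))^q\big]=\tfrac{2}{3}\cdot\tfrac{2}{m}D^q=\tfrac{4}{3m}D^q .
\]
The inequality therefore forces
\[
D^q\le \Gamma^q m^q\cdot \tfrac{4}{3m}D^q, \qquad\text{i.e.}\qquad m^{q-1}\ge \tfrac{3}{4\Gamma^q}.
\]
This bound does not involve $n$, so this function alone cannot give the $n^{1/q}$ dependence. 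We need a test function that uses all $n$ coordinates.

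\emph{A test function using all coordinates.} Take $f(x)=a$ if $\sum_j \chi(x_j)$ is even and $b$ otherwise, where $\chi(t)=0$ for $t<m/2$ and $\chi(t)=1$ otherwise.
\begin{itemize}
\item Each shift by $\tfrac m2 e_j$ flips exactly one $\chi$, hence flips $f$. So the left side is $nD^q$.
\item On the right, $f(x+\epsilon)\ne f(x)$ requires at least one coordinate $j$ with $\epsilon_j\ne0$ and $x_j$ at a boundary. By a union bound, the probability of this is at most $n\cdot\tfrac{4}{3m}$.
\end{itemize}
This gives $nD^q\le\Gamma^q m^q\cdot\tfrac{4n}{3m}D^q$, and again the factors of $n$ cancel, yielding only $m^{q-1}\gtrsim\Gamma^{-q}$.

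\emph{The approach that works.} To get $n^{1/q}$ we should use the trivial bound $\mathbb{P}\le 1$ on the right instead of the union bound: the right side is at most $\Gamma^q m^q D^q$, while the left side is $nD^q$. So the inequality implies
\[
nD^q\le \Gamma^q m^q D^q, \qquad\text{i.e.}\qquad m\ge n^{1/q}/\Gamma,
\]
which is exactly the claimed bound. Thus the parity function $f$ is the right test, combined with this crude estimate of the right-hand side.

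The main obstacle is choosing a test function whose left side grows linearly in $n$. The parity construction achieves this, since every antipodal shift changes the value of $f$. It also needs $M$ to have only two points, with the right side trivially bounded by $D^q$ inside the expectation.
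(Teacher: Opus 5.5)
Your final argument is correct. It is essentially the proof in Mendel–Naor, which the paper cites without reproducing: the $\{a,b\}$-valued parity function changes value under every antipodal shift $x\mapsto x+\tfrac m2 e_j$. So the left side equals $nD^q$, while the right side is trivially at most $\Gamma^q m^q D^q$, which forces $m\ge n^{1/q}/\Gamma$ for every even $m$ at which the inequality holds, in particular for $m=m_q(M;n,\Gamma)$.

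The single-coordinate and union-bound attempts are not wrong, only unnecessary. Also, in your last paragraph you mean that $M$ needs at least two points (the test function takes only two values), not ``only two points''.
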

\begin{theorem}[Theorem 4.1~\cite{mendel2008metric}]
    \label{thm5.7}
    Let $X$ be a $K$-convex Banach space with cotype $q$. Then for all $n \in \mathbb{N}$ and $m \in 4\mathbb{N}$,
    \[
    m \geq \frac{2n^{1/q}}{C_q(X)K_q(X)} \Longrightarrow \Gamma_q(X; n, m) \leq 15C_q(X) K_q(X)
    \]
\end{theorem}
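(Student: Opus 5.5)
The plan follows Mendel and Naor: combine three estimates. Rademacher cotype turns the $n$ long-range differences into one Rademacher sum. A smoothing step replaces long-range differences by averaged one-step differences. $K$-convexity bounds Rademacher sums of one-step differences by the averaged increment $\mathbb{E}_{\epsilon,x}\norm{f(x+\epsilon)-f(x)}^q$. Write $C=C_q(X)$ and $K=K_q(X)$. Throughout I work with $\mathbb{E}_x$ over $\mathbb{Z}_m^n$, and I use translation invariance of the uniform measure to move shifts freely.

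\textbf{Step 1 (smoothing).} Fix $f:\mathbb{Z}_m^n\to X$. For $k\in\mathbb{N}$ with $k<m/2$, let $B_k=\{-k,\dots,k\}^n$ and define the averaged function $\tilde f(x)=\mathbb{E}_{y\in B_k} f(x+y)$; I expect to take $k\approx m/4$, which is why $m\in 4\mathbb{N}$ is assumed. Then split
\[
f(x+\tfrac m2 e_j)-f(x)=\bigl[f(x+\tfrac m2 e_j)-\tilde f(x+\tfrac m2 e_j)\bigr]+\bigl[\tilde f(x+\tfrac m2 e_j)-\tilde f(x)\bigr]+\bigl[\tilde f(x)-f(x)\bigr].
\]
The two error terms are handled coordinate by coordinate. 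Summing over $j$ is the costly part: crude bounds give $\sum_j\mathbb{E}\norm{f-\tilde f}^q\lesssim n\,k^q\,\mathbb{E}_{\epsilon,x}\norm{f(x+\epsilon)-f(x)}^q$, which is too large by a factor $n$. So I will not estimate the error terms separately in norm. Instead, I first apply cotype to the whole sum and then smooth inside the Rademacher average:
\[
\sum_{j}\mathbb{E}_x\norm{f(x+\tfrac m2 e_j)-f(x)}^q\le C^q\,\mathbb{E}_{x,\sigma}\Bigl\|\sum_j\sigma_j\bigl(f(x+\tfrac m2 e_j)-f(x)\bigr)\Bigr\|^q .
\]
For fixed $\sigma$, the error $f-\tilde f$ is an average over $y\in B_k$ of $f(x)-f(x+y)$. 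A path from $x$ to $x+y$ moving all coordinates simultaneously takes about $k$ steps of the form $\epsilon\in\{-1,0,1\}^n$. Hence its $L_q$ norm is at most $k\,(\mathbb{E}_{\epsilon,x}\norm{f(x+\epsilon)-f(x)}^q)^{1/q}$ up to constants, since each step is uniformly distributed in $x$ and nearly uniform in $\epsilon$. Summed against the $\sigma_j$, this error contributes a term of order $k$ rather than $n^{1/q}k$.

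\textbf{Step 2 (main term via $K$-convexity).} The difference $\tilde f(x+\tfrac m2 e_j)-\tilde f(x)$ telescopes along $e_j$ into $m/2$ discrete partial derivatives of $\tilde f$. Averaging over the box $B_k$ converts $\sum_j\sigma_j\bigl(\tilde f(x+\tfrac m2 e_j)-\tilde f(x)\bigr)$ into an average over shifts $z$ of expressions $\frac m2\cdot\frac1{2}\sum_j\sigma_j\bigl(f(z+e_j)-f(z-e_j)\bigr)$, up to boundary corrections. The function $g_z(\epsilon)=f(z+\epsilon)$ on $\{-1,1\}^n$ has first-level Fourier coefficients $\widehat{g_z}(\{j\})$, and these are averaged symmetric differences in direction $j$. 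Therefore
\[
\mathbb{E}_\sigma\Bigl\|\sum_j\sigma_j\widehat{g_z}(\{j\})\Bigr\|^q\le K^q\,\mathbb{E}_\epsilon\norm{f(z+\epsilon)-f(z)}^q ,
\]
applying the definition of $K_q(X)$ to $\epsilon\mapsto f(z+\epsilon)-f(z)$. Averaging in $z$ bounds the main term by $(m/2)^qK^q\,\mathbb{E}_{\epsilon,x}\norm{f(x+\epsilon)-f(x)}^q$, with $\epsilon\in\{-1,1\}^n$ rather than $\{-1,0,1\}^n$. I will compare the two distributions by another splitting/translation argument, losing only an absolute constant.

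\textbf{Step 3 (assembling; main obstacle).} Collecting the terms gives
\[
\Gamma_q(X;n,m)\,m\lesssim C\bigl(K m+\text{error}\bigr).
\]
The hypothesis $m\ge 2n^{1/q}/(CK)$ is where the leftover $n^{1/q}$ from any unavoidable coordinatewise error term, of size about $n^{1/q}\cdot O(1)$, gets absorbed. After dividing by $m$, that term becomes at most $CK$. Tracking the constants should produce $15CK$.

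The hard part is Step 1–2 at the level of exact identities. Choosing the smoothing kernel so that the smoothed long-range differences are \emph{exactly} averages of symmetric one-step differences requires care. Mendel–Naor do this via Fourier multipliers on $\mathbb{Z}_m^n$; a pure box average leaves boundary terms. If the box average fails, I would define the smoothing through products of one-dimensional Fejér-type kernels and check positivity and contractivity directly.
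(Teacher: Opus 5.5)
The paper gives no proof of this statement; it imports it from Mendel--Naor. Judged on its own, your plan has a genuine gap, and it comes from the order of operations. You apply cotype first, to the half-period differences, and then try to smooth at scale $k\approx m/4$ inside the Rademacher average. Neither of the two steps this requires is established.

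\emph{Step~1.} The error term is $\sum_j\sigma_j\bigl[H(x+\tfrac m2e_j)-H(x)\bigr]$ with $H=f-\tilde f$. This is again a Rademacher sum of half-period differences, which is the very quantity the theorem is about, so bounding it by $O(k)$ is circular. Your path argument bounds $\|H(z)\|_{L_q}$ at a single point. It says nothing about a signed sum of $n$ such terms located at the $n$ different points $x+\tfrac m2e_j$; generic bounds give something like $nk$, or $n^{1/p}k$ using type.

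\emph{Step~2.} After telescoping, the one-step differences of $\tilde f$ sit at $x+y+se_j$, $0\le s<m/2$, with a shift depending on $j$. A box of side $2k+1\approx m/2$ is not nearly invariant under shifts of size up to $m/2$, so these are not boundary corrections. You never reach a single point $z$ at which $\sum_j\sigma_j(\cdot)$ becomes the Rademacher projection of one function $\epsilon\mapsto f(z+\epsilon)$.

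\emph{Step~3.} Here you replace the error by one of size $n^{1/q}\cdot O(1)$. That is what a smoothing at scale one produces. Your scale-$k$ smoothing gives $n^{1/q}k$, which the hypothesis on $m$ cannot absorb.

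The repair is to smooth at scale one and to apply cotype last. Put $I=\bigl(\mathbb{E}_{\epsilon,x}\|f(x+\epsilon)-f(x)\|^q\bigr)^{1/q}$, $g(x)=\mathbb{E}_\epsilon f(x+\epsilon)$ and $h_j(x)=\mathbb{E}_\epsilon[\epsilon_jf(x+\epsilon)]$, with $\epsilon$ uniform on $\{-1,0,1\}^n$.

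Jensen gives $\|f-g\|_{L_q}\le I$. So the crude coordinatewise estimate you discarded costs only $2n^{1/q}I\le C_q(X)K_q(X)\,mI$, and this is exactly what the hypothesis on $m$ is for.

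Since $4\mid m$, the half-period step splits into $m/4$ symmetric steps of length two. Splitting $\epsilon=\epsilon_je_j+\epsilon'$ then yields the exact identity
\[
g(x+\tfrac m2e_j)-g(x)=\sum_{t=0}^{m/4-1}\sum_{u\in\{-1,0,1\}}h_j\bigl(x+(2t+1+u)e_j\bigr).
\]
Use the triangle inequality and translation invariance in $x$ separately for each $j$, before any Rademacher sum is formed. This gives $\|g(\cdot+\tfrac m2e_j)-g\|_{L_q}\le\tfrac{3m}{4}\|h_j\|_{L_q}$, and the $j$-dependent shifts disappear.

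Only then apply cotype: $\sum_j\|h_j\|_{L_q}^q\le C_q(X)^q\,\mathbb{E}_{x,\sigma}\|\sum_j\sigma_jh_j(x)\|^q$. Note that $\sum_j\sigma_jh_j(x)$ is the first-level projection of the single function $\epsilon\mapsto f(x+\epsilon)-f(x)$. Write $\epsilon=\delta\odot r$ with $\delta_j\in\{0,1\}$, $\mathbb{P}(\delta_j=1)=2/3$, and $r$ uniform on $\{-1,1\}^n$, all independent. Applying Jensen in $\delta$ and the definition of $K_q(X)$ gives the bound $K_q(X)^qI^q$ directly on $\{-1,0,1\}^n$, so no comparison of step distributions is needed.

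Altogether
\[
\Gamma_q(X;n,m)\le\tfrac34C_q(X)K_q(X)+\tfrac{2n^{1/q}}{m}\le\tfrac74C_q(X)K_q(X),
\]
well within $15\,C_q(X)K_q(X)$.
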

\begin{lemma}[Lemma 7.1~\cite{mendel2008metric}]
    \label{lemma5.8}
    Let $(M, d)$ be a metric space, $n \in \mathbb{N}$, $\Gamma > 0$, and $0 <  q \leq p$. Then for all $\phi:\ell_p^n \to M$ and $s > 0$,
    \[
    n^{1/q}\kappa_\phi(2s) \leq \Gamma m_q(M; n, \Gamma) \omega_\phi\left(\frac{2\pi sn ^{1/p}}{m_q(M; n, \Gamma)}\right).
    \]
\end{lemma}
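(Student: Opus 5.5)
The plan is to test the metric cotype inequality \cref{eq:metric_cotype} on a function $f:\mathbb{Z}_m^n \to M$ obtained by composing $\phi$ with an embedding of the discrete torus into $\ell_p^n$. Set $m = m_q(M; n, \Gamma)$. By definition of $m_q$, \cref{eq:metric_cotype} then holds with constant $\Gamma$ for every $f:\mathbb{Z}_m^n \to M$. We want a parametrization of $\mathbb{Z}_m^n$ inside the sphere of radius $s$ in each coordinate with two properties:
\begin{enumerate}
\item antipodal moves $x \mapsto x + \frac{m}{2}e_j$ produce points at distance exactly $2s$;
\item the moves $x \mapsto x+\epsilon$ with $\epsilon \in \{-1,0,1\}^n$ produce points at distance at most $2\pi s n^{1/p}/m$.
\end{enumerate}

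\textbf{The map.} Following Mendel and Naor, I would take scalars complex and define
\[
f(x) = \phi\left(s\sum_{j=1}^n e^{2\pi i x_j/m} e_j\right), \qquad x \in \mathbb{Z}_m^n .
\]
This is well defined because $x_j$ is only determined mod $m$. The lemma is stated for $\ell_p^n$ with whatever scalar field Mendel and Naor use, so I adopt that convention. In the real case, the main obstacle is to realize the cycle $\mathbb{Z}_m$ in $\mathbb{R}$ so that antipodes sit at distance $2s$ and neighbours at distance $\lesssim s/m$. The first thing I would try is identifying $\mathbb{C}$ with $\mathbb{R}^2$ coordinatewise; this costs only constants that are absorbed in the convention of the cited lemma.

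\textbf{Left side of \cref{eq:metric_cotype}.} Replacing $x$ by $x + \frac{m}{2}e_j$ multiplies the $j$th coordinate by $e^{\pi i} = -1$ and leaves the others fixed. The two arguments of $\phi$ therefore differ by a vector of norm $2s$, so
\[
d\left(f\left(x+\tfrac{m}{2}e_j\right), f(x)\right) \geq \kappa_\phi(2s).
\]
Summing over $j$ and averaging over $x$, the left side is at least $n\,\kappa_\phi(2s)^q$.

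\textbf{Right side of \cref{eq:metric_cotype}.} For $\epsilon \in \{-1,0,1\}^n$, each coordinate changes by $s\,|e^{2\pi i\epsilon_j/m} - 1| \leq 2\pi s/m$. The $\ell_p^n$ distance between the two arguments of $\phi$ is thus at most $2\pi s n^{1/p}/m$, and
\[
d(f(x+\epsilon), f(x)) \leq \omega_\phi\left(\frac{2\pi s n^{1/p}}{m}\right).
\]
So the right side is at most $\Gamma^q m^q\, \omega_\phi(2\pi s n^{1/p}/m)^q$.

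\textbf{Conclusion.} Combining the two bounds gives
\[
n\,\kappa_\phi(2s)^q \leq \Gamma^q m^q\, \omega_\phi\left(\frac{2\pi s n^{1/p}}{m}\right)^q .
\]
Taking $q$th roots yields the claimed inequality. If either modulus is infinite or zero, the inequality is trivial or follows the same way, so no separate case analysis is needed beyond noting this. The hypothesis $q \leq p$ is not used in this computation; it only matters in how the lemma is applied later.
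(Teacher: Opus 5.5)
Your argument is correct and is exactly the Mendel--Naor proof of their Lemma 7.1, which the paper cites without reproving. That proof applies the cotype inequality at $m=m_q(M;n,\Gamma)$ to $x\mapsto\phi\bigl(s\sum_j e^{2\pi i x_j/m}e_j\bigr)$, where antipodal moves give distance exactly $2s$ and $\epsilon$-moves give distance at most $2\pi s n^{1/p}/m$. The one inaccuracy is your aside on real scalars: identifying $\mathbb{C}$ with $\mathbb{R}^2$ does not ``absorb constants'', since the inequality has explicit ones; it replaces $\ell_p^n$ by $\ell_p^n(\ell_2^2)$. So what you have proved is the statement for complex $\ell_p^n$, which is enough for applications to $L_p$ because complex $\ell_p^n$ embeds isometrically into real $L_p$.
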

The following lemma provides a way for the choice of $m$ in Theorem \ref{thm5.7} to be arbitrarily close to its lower bound. This will be used to obtain our final bound on the coarse embeddability ratios from $L_p$ into $L_q$, $p > q \geq 2$.
\begin{samepage}
\begin{lemma}
    \label{lemma5.9}
    Let $M > 0$ and $q > 1$. For $n \in \mathbb{N}$, define
    \[
    A(n) = \frac{2n^{1/q}}{M}.
    \]
    For all $\epsilon > 0$ and $C > 0$, there exists $n \in \mathbb{N}$ with $n > C$ such that there exists $m \in 4\mathbb{N}$ with $A(n) \leq m < A(n) + \epsilon$. 
\end{lemma}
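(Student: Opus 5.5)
We need to find $n > C$ and $m \in 4\mathbb{N}$ with $A(n) \le m < A(n)+\epsilon$. Equivalently, we want some $n > C$ for which $A(n)$ lies within $\epsilon$ below a multiple of $4$; the natural candidate is $m = 4\lceil A(n)/4\rceil$, the smallest multiple of $4$ that is at least $A(n)$. So the task reduces to showing that the fractional parts of the numbers $A(n)/4 = n^{1/q}/(2M)$ come arbitrarily close to $1$ from below (or equal an integer) for arbitrarily large $n$.

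The key observation is that the increments of $A(n)$ tend to zero. By the mean value theorem,
\[
A(n+1) - A(n) = \frac{2}{M}\bigl((n+1)^{1/q} - n^{1/q}\bigr) \le \frac{2}{Mq} n^{1/q - 1},
\]
which tends to $0$ as $n \to \infty$ because $q > 1$. At the same time $A(n) \to \infty$. Choose $N > C$ so large that $A(n+1) - A(n) < \epsilon$ for all $n \ge N$. Then pick any $k \in \mathbb{N}$ with $4k > A(N)$. Since $A$ is increasing and unbounded, there is a largest... more conveniently, a smallest index $n_0 > N$ with $A(n_0) \ge 4k$; let $n = n_0 - 1 \ge N$, so $A(n) < 4k \le A(n+1)$. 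This places $4k$ just above $A(n)$, and it is tempting to finish by bounding $4k - A(n) \le A(n+1) - A(n) < \epsilon$. That gives $A(n) < 4k < A(n) + \epsilon$, which is the desired inequality with $m = 4k$ and $n \ge N > C$.

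Nothing here is a genuine obstacle. The only care needed is the edge case where $n$ might equal $N$ when $A(N) < 4k \le A(N+1)$. This is harmless, since $N > C$ was required from the start. One must also note that the strict and non-strict inequalities match the statement: we obtain $A(n) < m$, which is stronger than $A(n) \le m$, and $m < A(n) + \epsilon$ follows from $A(n+1) - A(n) < \epsilon$. The hypothesis $q > 1$ is used exactly once, to make the increments vanish; for $q = 1$ the increments would be the constant $2/M$, and the argument would fail whenever $\epsilon < 2/M$.
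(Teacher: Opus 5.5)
Your proof is correct and takes essentially the same approach as the paper. Both use the mean value theorem to make the increments $A(n+1)-A(n)$ eventually smaller than $\epsilon$, and then trap a multiple of $4$ between consecutive values $A(n) < 4k \le A(n+1)$ with $n > C$; the paper finds the crossing point via the intermediate value theorem on the continuous extension of $A$, while you take the first integer index at which $A$ reaches $4k$, which works equally well.
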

\begin{proof}
    Define $A:(0, \infty) \to (0, \infty)$ by
    \[
    A(x) = \frac{2x^{1/q}}{M}.
    \]
    Note that $A'$ is strictly decreasing since $q > 1$. By the Mean Value Theorem, for all $x \in (0, \infty)$, we see there exists $c_x \in (x, x + 1)$ such that 
    \[
    A'(c_x) = A(x + 1) - A(x).
    \]
    Thus,
    \[
    A'(x) > A'(c_x) = A(x + 1) - A(x).
    \]
    Now, let $\epsilon > 0$. Let
    \[
    R = \left(\frac{\epsilon M q}{2}\right)^{\frac{q}{1 - q}}.
    \]
    Suppose that $x > R$. Then
    \[
    A'(x) < A'(R) = \frac{2R^{\frac{1 - q}{q}}}{Mq} = \epsilon.
    \]
    Thus, for all $x > R$, we see $A(x + 1) - A(x) < \epsilon$, that is, $A(x + 1) < A(x) + \epsilon$. Now, since $\lim\limits_{x \to \infty} A(x) = \infty$, we see by the Intermediate Value Theorem and the fact that $A$ is increasing that there exists $m \in 4\mathbb{N}$ such that $A(x) = m$ and $x - 1 > \max\{R, C\}$. Let $n$ be the unique natural number in $(x - 1, x]$. Then $A(n) \leq A(x) = m \leq A(n + 1) < A(n) + \epsilon$.
\end{proof}
\end{samepage}
\begin{proposition}
    \label{prop5.10}
    Let $p > q \geq 2$. Then $\mathcal{CR}(L_p, L_q) \leq 30$\footnote{We remark GPT-5.5 Pro was able to extend this result to $\mathcal{CR}(L_p, L_q) \leq 4$ using an inequality the author was not aware of. The proof is somewhat similar, relying on metric cotype inequalities for $q$-barycentric targets presented in \cite{eskenazis2019nonpositive}.}
\end{proposition}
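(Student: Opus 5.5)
We argue by contradiction, combining Lemma \ref{lemma5.8} with Theorem \ref{thm5.7} applied to the target $L_q$. Suppose $\phi: L_p \to L_q$ has $\mathcal{R}(\phi) > 30$. By Lemma \ref{lemma2.4} and the rescaling remark, we may fix $\Delta>0$ and $\delta > 30$ such that $\norm{x-y}\ge \Delta$ implies $\norm{\phi(x)-\phi(y)}\ge\delta$, and $\norm{x-y}\le 1$ implies $\norm{\phi(x)-\phi(y)}\le 1$. Thus $\kappa_\phi(\Delta)\ge\delta$ and $\omega_\phi(1)\le 1$. Since $L_p$ is infinite-dimensional (indeed $\ell_p$, hence every $\ell_p^n$, embeds isometrically), we restrict $\phi$ to $\ell_p^n$ for $n$ to be chosen large.

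For $2\le q<\infty$, the space $L_q$ has cotype $q$ and is $K$-convex (it has nontrivial type $2$). Put $M = C_q(L_q)K_q(L_q)$. Theorem \ref{thm5.7} says that for every $m\in 4\mathbb{N}$ with $m\ge 2n^{1/q}/M$ we have $\Gamma_q(L_q;n,m)\le 15M$. Taking $\Gamma = 15M$ (or slightly more), this yields
\[
m_q(L_q;n,\Gamma)\le \min\{m\in 4\mathbb{N}: m\ge 2n^{1/q}/M\}.
\]
We use Lemma \ref{lemma5.9}, applied with this $M$ and arbitrary $\eta>0$, to pick arbitrarily large $n$ with such an $m< 2n^{1/q}/M+\eta$. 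Hence $m_q \le 2n^{1/q}/M + \eta$, so $\Gamma m_q \le 30 n^{1/q} + 15M\eta$. This is exactly where the constant $30$ comes from.

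Next we apply Lemma \ref{lemma5.8} with exponents $q\le p$ and $2s=\Delta$:
\[
n^{1/q}\,\delta \le n^{1/q}\kappa_\phi(\Delta) \le \Gamma m_q\,\omega_\phi\!\left(\frac{\pi\Delta n^{1/p}}{m_q}\right).
\]
Lemma \ref{lemma5.6} gives $m_q\ge n^{1/q}/\Gamma$, so the argument of $\omega_\phi$ is at most $\pi\Delta\Gamma n^{1/p-1/q}$. Because $p>q$, this tends to $0$, and for $n$ large it is at most $1$, giving $\omega_\phi(\cdot)\le 1$. Combining, $n^{1/q}\delta \le 30n^{1/q}+15M\eta$. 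Dividing by $n^{1/q}$ and letting $\eta\to 0$ (or $n\to\infty$) gives $\delta\le 30$, contradicting $\delta>30$.

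The main obstacle is the bookkeeping around $m_q$. Lemma \ref{lemma5.8} is phrased with the minimal $m_q(M;n,\Gamma)$, while Theorem \ref{thm5.7} only controls $\Gamma_q$ for $m\in 4\mathbb{N}$ beyond a threshold. We therefore need $m_q$ bounded above by roughly $2n^{1/q}/M$ (this is Lemma \ref{lemma5.9}) and also bounded below (Lemma \ref{lemma5.6}) so that the scale fed into $\omega_\phi$ shrinks. If the definition of $m_q$ requires \eqref{eq:metric_cotype} only at that single $m$, the monotonicity issue disappears. Otherwise we apply Lemma \ref{lemma5.8}'s proof directly with the specific $m$ chosen above, which is what that proof actually uses.
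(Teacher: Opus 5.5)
Your proposal is correct and is essentially the paper's argument: Theorem \ref{thm5.7} together with Lemma \ref{lemma5.9} bounds $m_q(L_q;n,\Gamma)$ above by some $m\in 4\mathbb{N}$ just past $2n^{1/q}/M$, and Lemma \ref{lemma5.6} bounds it below, so that the argument of $\omega_\phi$ in Lemma \ref{lemma5.8} falls below $1$ for large $n$; dividing by $n^{1/q}$ then gives $\delta\le 30$. The differences are cosmetic: you fix $\Gamma=15M$ where the paper takes $\Gamma=\Gamma_q(L_q;n,m)$ (either works, since the infimum defining $\Gamma_q$ is attained), and the ``monotonicity'' worry in your last paragraph is moot, because the upper bound $m_q\le m$ and the lower bound from Lemma \ref{lemma5.6} are exactly what the argument needs.
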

\begin{proof}
    First, note that $\ell_p$ embeds isometrically into $L_p$, so it suffices to show $\mathcal{CR}(\ell_p, L_q) \leq 30$. Let $\phi: \ell_p \to L_q$ and suppose $\mathcal{R}(\phi) > 30$. Applying Lemma \ref{lemma2.4}, let $\Delta > 0$ and $\delta > 30$ such that for all $x, y \in \ell_p$,
    \[
    \norm{x - y}_p \geq \Delta \Rightarrow \norm{\phi(x) - \phi(y)}_q \geq \delta
    \]
    and
    \[
    \norm{x - y}_p \leq 1 \Rightarrow \norm{\phi(x) - \phi(y)}_q \leq 1.
    \]
    Set $M = C_q(L_q)K_q(L_q)$ and let $\epsilon > 0$. Since $L_q$ has Rademacher (and metric) cotype $q$, we see $C_q(L_q)$ is finite. Since $L_q$ has nontrivial type, we see by Theorem 2.1 of ~\cite{pisier1982holomorphic} that $L_q$ is $K$-convex, so $K_q(L_q) < \infty$. Let $N \in \mathbb{N}$ such that $n^{1/p - 1/q} < \frac{1}{15 M \Delta \pi}$ for all $n \geq N$ (we may achieve this since $1/p - 1/q < 0$) and choose $n \in \mathbb{N}$ using Lemma \ref{lemma5.9} with $C = N$. Applying Lemma \ref{lemma5.8} with $s = \Delta/2$, we see for all $\Gamma > 0$,
    \begin{equation}
    \label{eq:prop5.10_lemma5.8_usage}
    \tag{$\diamondsuit$}
    n^{1/q}\kappa_\phi(\Delta) \leq \Gamma m_q(L_q; n, \Gamma) \omega_\phi\left(\frac{\Delta \pi n^{1/p}}{m_q(L_q; n, \Gamma)}\right).
    \end{equation}
    By choice of $n$, we see there exists $m \in 4\mathbb{N}$ with
    \begin{equation}
    \label{eq:prop5.10_m_bound}
    \tag{$\clubsuit$}
    \frac{2n^{1/q}}{M} \leq m < \frac{2n^{1/q}}{M} + \epsilon.
    \end{equation}
    Now, taking $\Gamma = \Gamma_q(L_q; n, m)$, we see by Theorem \ref{thm5.7} that $\Gamma \leq 15M$.
    Observe that
    \[
    m_q(L_q; n, \Gamma) \leq m < \frac{2n^{1/q}}{M} + \epsilon.
    \]
    By Lemma \ref{lemma5.6} and the fact $\Gamma \leq 15M$, we see by choice of $n$ that
    \begin{equation}
    \label{eq:prop5.10_omega_bound}
    \tag{$\spadesuit$}
    \omega_\phi\left(\frac{\Delta \pi n^{1/p}}{m_q(L_q; n, \Gamma)}\right) \leq \omega_\phi\left(\Gamma \Delta \pi n^{1/p - 1/q}\right) \leq \omega_\phi\left(15M \Delta \pi n^{1/p - 1/q}\right) \leq \omega_\phi(1) \leq 1.
    \end{equation}
    Thus, since $\Gamma \leq 15M$, we see by combining \Cref*{eq:prop5.10_lemma5.8_usage,eq:prop5.10_m_bound,eq:prop5.10_omega_bound} that
    \[
    \delta \leq \kappa_\phi(\Delta) \leq \frac{\Gamma m_q(L_q; n, \Gamma)}{n^{1/q}} \omega_\phi\left(\frac{\Delta \pi n^{1/p}}{m_q(L_q; n, \Gamma)}\right) \leq \frac{15M}{n^{1/q}} \left(\frac{2n^{1/q}}{M} + \epsilon\right) \leq 30 + \frac{15M\epsilon}{n^{1/q}}.
    \]
    Taking $\epsilon$ to be arbitrarily small (we also will obtain $n \to \infty$ by the dependence of $n$ on $\epsilon$), we see that $\delta \leq 30$, a contradiction. Thus, $\mathcal{CR}(\ell_p, L_q) \leq 30$, so $\mathcal{CR}(L_p, L_q) \leq 30$.
\end{proof}
\begin{corollary}
    \label{cor5.12}
    Let $p > 2$ and $0 < q < 2$. Then $\mathcal{CR}(L_p, L_q) \leq 30^{2/q}$.
\end{corollary}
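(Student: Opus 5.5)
The plan is to reduce to Proposition \ref{prop5.10} using the classical fact that for $0 < q < 2$, the metric space $(L_q, \norm{\cdot}_q^{q/2})$ embeds isometrically into a Hilbert space. (Equivalently, $\norm{f - g}_q^{q}$ is a negative definite kernel; this is the Bretagnolle--Dacunha-Castelle--Krivine theorem, and it holds for $0<q\le 2$.) Let $T: L_q \to L_2$ denote such an isometric embedding, so that
\[
\norm{T(f) - T(g)}_2 = \norm{f - g}_q^{q/2}.
\]

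Fix $\phi: L_p \to L_q$ and consider $\psi = T \circ \phi: L_p \to L_2$. Since the map $t \mapsto t^{q/2}$ is increasing, the moduli transform directly:
\[
\kappa_\psi(t) = \kappa_\phi(t)^{q/2}, \qquad \omega_\psi(t) = \omega_\phi(t)^{q/2}.
\]
Passing to limits gives $\kappa(\psi) = \kappa(\phi)^{q/2}$ and $\omega(\psi) = \omega(\phi)^{q/2}$, and therefore
\[
\mathcal{R}(\psi) = \mathcal{R}(\phi)^{q/2}.
\]
This identity should also be checked against the conventions for $0$ and $\infty$; it is consistent there as well. Equivalently, one can argue through Lemma \ref{lemma2.4}: the constants $\Delta, \delta, \Lambda, \lambda$ for $\phi$ become $\Delta, \delta^{q/2}, \Lambda, \lambda^{q/2}$ for $\psi$.

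Next apply Proposition \ref{prop5.10} with the exponent pair $p > 2$ and target $L_2$. Here the target is the Hilbert space $L_2(\nu)$ produced by the embedding. Proposition \ref{prop5.10} is stated for $q \geq 2$, and its proof only uses the cotype and $K$-convexity constants of the target space. $L_2(\nu)$ is infinite-dimensional, or it can be taken to be so after enlarging it, and these constants do not depend on the measure. So the proof goes through verbatim and gives $\mathcal{R}(\psi) \leq 30$. Hence $\mathcal{R}(\phi)^{q/2} \leq 30$, i.e.\ $\mathcal{R}(\phi) \leq 30^{2/q}$. Taking the supremum over all $\phi$ yields $\mathcal{CR}(L_p, L_q) \leq 30^{2/q}$.

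The main obstacle is justifying the embedding step cleanly. This is where the paper's hint about using $\norm{\cdot}_p^{p/2}$ for $0<p<2$ enters. If citing the embedding theorem is considered too heavy, an alternative is to rerun the proof of Proposition \ref{prop5.10} directly on the metric space $(L_q, \norm{\cdot}_q^{q/2})$ with cotype exponent $2$. Lemmas \ref{lemma5.6} and \ref{lemma5.8} are stated for general metric spaces. What this route needs is the Theorem \ref{thm5.7}-type bound $\Gamma_2 \leq 15 C_2 K_2$, and that bound is exactly what the Hilbert embedding provides, with $C_2 = K_2 = 1$. This yields the same constant $30$ before undoing the power $q/2$.
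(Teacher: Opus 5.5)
Your proposal is correct and follows essentially the same route as the paper. Both compose with the isometric embedding of $(L_q,\norm{\cdot}_q^{q/2})$ into $L_2$, note that the separation constants (equivalently, $\mathcal{R}$ itself) are raised to the power $q/2$, and conclude by applying Proposition \ref{prop5.10} with target $L_2$. Your justification of the embedding via negative definiteness of $\norm{f-g}_q^q$ and Schoenberg's theorem is a nice touch: it covers $0<q<1$ directly, without the ultraproduct workaround in the paper's footnote.
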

\begin{proof}
    By Theorem 7 of ~\cite{bretagnolle1966lois} \footnote{Theorem 7 of ~\cite{bretagnolle1966lois} only states the result for $1 \leq q \leq 2$. In the proof of Theorem 7.3 of ~\cite{mendel2008metric}, they state the result holds for $0 < q \leq 2$. The proof in the case $0 < q < 1$ should be similar. One can follow the proof of Theorem 7 of ~\cite{bretagnolle1966lois} exactly, until the final step, where one must show the ultraproduct of $L_q(\mu)$ is itself an $L_q(\mu)$ space. This statement is true due to Proposition 3.3 of ~\cite{schreiber1972quelques}.}, there exists an isometric embedding $T$ of $L_q$ with the metric $\norm{x - y}_q^{q/2}$ into $L_2$. Suppose $\phi: L_p \to L_q$ with $\mathcal{R}(\phi) > 30^{2/q}$. By Lemma \ref{lemma2.4}, there exist $\Delta, \delta, \Lambda, \lambda > 0$ such that for all $x, y \in L_p$
    \[
    \norm{x - y}_p \geq \Delta \Rightarrow \norm{\phi(x) - \phi(y)}_q \geq \delta,
    \]
    \[
    \norm{x - y}_p \leq \Lambda \Rightarrow \norm{\phi(x) - \phi(y)}_q \leq \lambda,
    \]
    and $\frac{\delta}{\lambda} > 30^{2/q}$. Thus, we see for all $x, y \in L_p$,
    \[
    \norm{x - y}_p \geq \Delta \Rightarrow \norm{T\phi(x) - T\phi(y)}_2 \geq \delta^{q/2},
    \]
    \[
    \norm{x - y}_p \leq \Lambda \Rightarrow \norm{T\phi(x) - T\phi(y)}_2 \leq \lambda^{q/2}.
    \]
    By Lemma \ref{lemma2.4} in the reverse direction, 
    \[
    \mathcal{R}(T \circ \phi) \geq \frac{\delta^{q/2}}{\lambda^{q/2}} > 30.
    \]
    This shows $\mathcal{CR}(L_p, L_2) > 30$, contradicting Proposition \ref{prop5.10}. Thus, $\mathcal{CR}(L_p, L_q) \leq 30^{2/q}$.
\end{proof}
\begin{remark}
    From the previous two results, we see if $p > 2$ and $p > q \geq 1$, then $\mathcal{CR}(L_p, L_q) \leq 900$. For all other cases (that is, $p \leq 2$ or $p \leq q$), it is known (see \cite{bretagnolle1966lois, mendel2004euclidean, wojtaszczyk1996banach}) that $L_p$ coarsely embeds into $L_q$, so $\mathcal{CR}(L_p, L_q) = \infty$.
\end{remark}

%% file: 06_resolution.tex
\section{Spaces of Arbitrarily Large Finite Coarse Embeddability Ratio}
\label{sec:resolution}
In \cite{rosendal2023separationratiosmapsbanach}, Rosendal posed the following problem.
\begin{problem}
    Let $X$ and $E$ be Banach spaces with $\mathcal{CR}(X, E) > 1$. Does it follow that $\mathcal{CR}(X, E) = \infty$?
\end{problem}
In this section, we resolve this problem in the negative, finding Banach spaces $X_k, E_k$ such that $k \leq \mathcal{CR}(X_k, E_k) < \infty$. First, we introduce a graph induced by a separable metric space and a countable partition.
\begin{definition}[Graph induced by a partition]
    Let $(M, d)$ be a separable metric space and let $\{A_i\}_{i = 1}^\infty$ be a countable partition of $M$. The graph $\mathcal{G}(M, \{A_i\}_{i = 1}^\infty)$ is the graph on $\mathbb{N}$ by saying two distinct natural numbers $n, k$ are connected if $d(A_n, A_k) \leq 1$.
\end{definition}
The following definition allows us to define a norm using a graph on $\mathbb{N}$. We first define the norm on $c_{00}$, then extend it to an equivalent norm on $\ell_1$ using the lemma following the definition.
\begin{definition}[Graph-induced norm on $c_{00}$]
    Let $G$ be a graph on $\mathbb{N}$ and let $k \geq 1$. We define
    \[
    \mathcal{L}_k(G) = \{f: \mathbb{N} \to [0, k] : \modulus{f(i) - f(j)} \leq 1 \text{ whenever $i$ is connected to $j$ in $G$}\}
    \]
    For $a = \sum\limits_{i = 1}^n a_i e_i \in c_{00}$, we define
    \[
    \norm{a}_{G, k} = \max\left\{\frac{1}{2}\sum\limits_{i = 1}^n \modulus{a_i}, \sup\limits_{f \in \mathcal{L}_k(G)}\modulus{\sum\limits_{i = 1}^n a_i f(i)}\right\}.
    \]
\end{definition}
\begin{lemma}
    Let $G$ be a graph on the natural numbers and let $k \geq 1$. For $a = \sum\limits_{i = 1}^n a_i e_i \in c_{00}$,
    \[
    \frac{1}{2}\norm{a}_1 \leq \norm{a}_{G, k} \leq k\norm{a}_1.
    \]
\end{lemma}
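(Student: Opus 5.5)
The lower bound is immediate from the definition. The norm $\norm{a}_{G,k}$ is a maximum of two quantities, and the first of them is $\frac{1}{2}\sum_{i=1}^n \modulus{a_i} = \frac{1}{2}\norm{a}_1$. So $\norm{a}_{G,k} \geq \frac{1}{2}\norm{a}_1$ with no further work.

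For the upper bound, I will bound each of the two quantities in the maximum by $k\norm{a}_1$ separately.

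\textbf{First term.} Since $k \geq 1$, we have $\frac{1}{2}\norm{a}_1 \leq k\norm{a}_1$ directly.

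\textbf{Second term.} Fix any $f \in \mathcal{L}_k(G)$. By definition $f$ takes values in $[0,k]$, so $\modulus{f(i)} \leq k$ for every $i$. The triangle inequality then gives
\[
\modulus{\sum_{i=1}^n a_i f(i)} \leq \sum_{i=1}^n \modulus{a_i}\,\modulus{f(i)} \leq k\sum_{i=1}^n \modulus{a_i} = k\norm{a}_1 .
\]
This bound does not depend on $f$, so taking the supremum over $\mathcal{L}_k(G)$ gives the same bound. The supremum is over a nonempty set, since the constant function $0$ lies in $\mathcal{L}_k(G)$, so it is a genuine finite number.

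Combining the two terms gives $\norm{a}_{G,k} \leq k\norm{a}_1$. Note that the Lipschitz-type edge condition defining $\mathcal{L}_k(G)$ plays no role here; only the range bound $f(i) \in [0,k]$ is used.

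There is no real obstacle in this argument. The only points to watch are that $k \geq 1$ is needed for the first term and that $\mathcal{L}_k(G)$ is nonempty.

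The paper's stated purpose for this lemma is to extend $\norm{\cdot}_{G,k}$ to $\ell_1$. The two-sided estimate does this: it shows $\norm{\cdot}_{G,k}$ is equivalent to $\norm{\cdot}_1$ on $c_{00}$, so it extends by density to an equivalent norm on $\ell_1$. One would also check that $\norm{\cdot}_{G,k}$ is actually a norm, but this is clear, since it is a maximum of seminorms that dominates $\frac{1}{2}\norm{\cdot}_1$.
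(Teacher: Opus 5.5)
Your proof is correct and follows the paper's argument. The lower bound comes from the first term of the maximum, and the upper bound from $|f(i)| \leq k$ for $f \in \mathcal{L}_k(G)$. You also make explicit the trivial bound $\frac{1}{2}\norm{a}_1 \leq k\norm{a}_1$ for the first term, which the paper leaves implicit.
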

\begin{proof}
    Clearly,
    \[
    \norm{a}_{G, k} = \max\left\{\frac{1}{2}\norm{a}_1, \sup\limits_{f \in \mathcal{L}_k(G)}\modulus{\sum\limits_{i = 1}^n a_i f(i)}\right\} \geq \frac{1}{2}\norm{a}_1.
    \]
    Let $f \in \mathcal{L}_k(G)$. For all $i \in \mathbb{N}$, we see by definition that $\modulus{f(i)} \leq k$. Thus, by H\"{o}lder's inequality, we see
    \[
    \modulus{\sum\limits_{i = 1}^n a_i f(i)} \leq \sum\limits_{i = 1}^n \modulus{a_i}\modulus{f(i)} \leq k\sum\limits_{i = 1}^n \modulus{a_i} = k \norm{a}_1.
    \]
\end{proof}
\begin{definition}[Graph-induced norm on $\ell_1$]
    Let $G$ be a graph on $\mathbb{N}$ and let $k \geq 1$. By the previous lemma, the norm $\norm{\,\cdot\,}_{G, k}$ on $c_{00}$ extends to an equivalent norm on all of $\ell_1$. We define $E_k(G) = (\ell_1, \norm{\,\cdot\,}_{G, k})$ to be the resulting space.
\end{definition}
The following lemma allows us to concretely compute the graph-induced norm in the case of differences in the standard unit vectors. This will allow us to apply Lemma \ref{lemma2.4} to estimate the coarse embeddability ratio from an $L_p$ space into $E_k$.
\begin{lemma}
    \label{lemma6.5}
    Let $G$ be a graph on $\mathbb{N}$ and let $k \geq 1$. Let $d_G$ denote the graph metric on $G$ (with $d_G(i, j) = \infty$ if $i$ and $j$ lie in different connected components). For all $i, j \in \mathbb{N}$,
    \[
    \norm{e_i - e_j}_{G, k} = \min\{d_G(i, j), k\}.
    \]
\end{lemma}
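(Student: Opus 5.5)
The plan is to compute the two quantities in the definition of $\norm{\,\cdot\,}_{G,k}$ separately for $a = e_i - e_j$, and then take their maximum. The case $i = j$ is trivial: both sides are $0$, since $d_G(i,i) = 0$. So assume $i \neq j$.

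\emph{The $\ell_1$ term.} Here $\frac{1}{2}\norm{e_i - e_j}_1 = 1$. For any $f \in \mathcal{L}_k(G)$, the supremum term reduces to $\modulus{f(i) - f(j)}$. So it suffices to show
\[
\sup_{f \in \mathcal{L}_k(G)} \modulus{f(i) - f(j)} = \min\{d_G(i,j), k\}.
\]
Once this holds, the final step is immediate. Since $i \neq j$ we have $d_G(i,j) \geq 1$, and $k \geq 1$, so $\min\{d_G(i,j),k\} \geq 1$. Hence the maximum with the $\ell_1$ term $1$ is exactly $\min\{d_G(i,j),k\}$.

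\emph{Upper bound.} Fix $f \in \mathcal{L}_k(G)$. Since $f$ takes values in $[0,k]$, we get $\modulus{f(i)-f(j)} \leq k$. If $d_G(i,j) = D < \infty$, choose a path $i = v_0, v_1, \dots, v_D = j$ in $G$. Consecutive vertices are adjacent, so $\modulus{f(v_t) - f(v_{t-1})} \leq 1$. The triangle inequality then gives $\modulus{f(i) - f(j)} \leq D$. If $d_G(i,j) = \infty$, only the bound $k$ is needed.

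\emph{Lower bound.} I would exhibit the extremal function explicitly:
\[
f(l) = \min\{d_G(i,l), k\},
\]
with the convention $\min\{\infty, k\} = k$. Then $f(i) = 0$ and $f(j) = \min\{d_G(i,j),k\}$, so $\modulus{f(i) - f(j)}$ attains the claimed value. It remains to check that $f \in \mathcal{L}_k(G)$. Clearly $f$ takes values in $[0,k]$. If $l$ and $l'$ are adjacent, they lie in the same connected component, so there are two cases.
\begin{enumerate}
    \item If that component is not the one containing $i$, then both distances are $\infty$ and $f(l) = f(l') = k$.
    \item Otherwise $\modulus{d_G(i,l) - d_G(i,l')} \leq 1$ by the triangle inequality for $d_G$. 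Truncation $t \mapsto \min\{t,k\}$ is $1$-Lipschitz on $[0,\infty)$, so $\modulus{f(l)-f(l')} \leq 1$.
\end{enumerate}

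The only point requiring care is the possibly infinite distance in the lower-bound construction, handled by the case split above. The remaining steps are routine.
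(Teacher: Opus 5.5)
Your proof is correct and takes the same approach as the paper. In both, the $\ell_1$ term equals $1 \leq \min\{d_G(i,j),k\}$, the edge condition and the range $[0,k]$ bound $\modulus{f(i)-f(j)}$ by $\min\{d_G(i,j),k\}$, and the truncated distance function $f(l)=\min\{d_G(i,l),k\}$ attains this bound. You also spell out two points the paper leaves implicit: the path and triangle-inequality argument for the upper bound, and the check, including the infinite-distance case, that the truncated distance function lies in $\mathcal{L}_k(G)$.
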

\begin{proof}
    If $i = j$, the claim is trivially true. Suppose $i \neq j$. Then 
    \[
    \frac{1}{2}\norm{e_i - e_j}_{1} = 1 \leq \min\{d_G(i, j), k\}.
    \]
    Additionally, if $f \in \mathcal{L}_k(G)$, then $\modulus{f(i) - f(j)} \leq d_G(i, j)$ by definition of $\mathcal{L}_k(G)$ and $\modulus{f(i) - f(j)} \leq k$ since $f(\mathbb{N}) \subseteq [0, k]$. Thus, we see
    \[
    \norm{e_i - e_j}_{G, k} \leq \min\{d_G(i, j), k\}.
    \]
    For the reverse inequality, fix $i$ and define $f_i(n) = \min\{d_G(i, n), k\}$ for each $n \in \mathbb{N}$. Clearly, $f_i \in \mathcal{L}_k(G)$. Furthermore, $f_i(j) = \min\{d_G(i, j), k\}$ and $f_i(i) = 0$, so
    \[
    \norm{e_i - e_j}_{G, k} \geq \min\{d_G(i, j), k\}.
    \]
    Thus,
    \[
    \norm{e_i - e_j}_{G, k} = \min\{d_G(i, j), k\}.
    \]
\end{proof}
Our final theorem allows us to resolve the stated problem of Rosendal.
\begin{theorem}
    \label{thm6.6}
    Let $2 < p < \infty$ and let $(X, \Sigma, \mu)$ be a separable measure space. Let $Z = L_p(X, \Sigma, \mu)$. For each $k \in \mathbb{N}$, there exists a renorming $E_k$ of $\ell_1$ such that
    \[
    k \leq \mathcal{CR}(Z, E_k) < \infty.
    \]
\end{theorem}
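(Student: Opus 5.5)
The plan is to take $E_k = E_k(G)$ for a well-chosen graph $G$ on $\mathbb{N}$ induced by a partition of $Z$, and to prove the two inequalities separately.

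\emph{Construction and lower bound.} Since $Z$ is separable, I will fix a countable partition $\{A_i\}_{i=1}^\infty$ of $Z$ into nonempty sets of diameter at most $1$. One way is to tile a dense countable set by Voronoi-type cells of radius $1/2$, choosing cells greedily. Set $G = \mathcal{G}(Z, \{A_i\})$ and define $\phi: Z \to E_k(G)$ by $\phi(x) = e_i$ for $x \in A_i$.

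For the expansion, take $\|x-y\| \le \Lambda$ with $\Lambda < 1$ (or exactly $1$, up to the strictness issue in the definition of $\mathcal{G}$). If $x \in A_i$ and $y \in A_j$, then $d(A_i,A_j) \le 1$, so $i$ and $j$ are adjacent or equal. By Lemma \ref{lemma6.5}, $\|\phi(x)-\phi(y)\| \le 1$, so $\omega(\phi) \le 1$.

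For the compression, if $d_G(i,j) = d$, then a path of $d$ edges shows that points of $A_i$ and $A_j$ are within distance about $d \cdot (1 + 1 + \text{slack})$, since each step costs the diameter of a cell plus a gap of at most $1$. Hence $\|x-y\| \ge 3k$, say, forces $d_G(i,j) \ge k$. Lemma \ref{lemma6.5} then gives $\|\phi(x)-\phi(y)\| = k$, so $\kappa(\phi) \ge k$. Therefore $\mathcal{R}(\phi) \ge k$ and $\mathcal{CR}(Z,E_k) \ge k$.

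\emph{Upper bound.} The approach is to show $\mathcal{CR}(Z, E_k) < \infty$ by bounding it by a constant depending only on $k$ times a quantity forced by metric cotype. The key point is that $E_k$ is isomorphic to $\ell_1$ with constant $2k$, by the preceding lemma. Suppose $\psi: Z \to E_k$ has $\mathcal{R}(\psi) > K$, realized with constants $\Delta, \delta, \Lambda, \lambda$ as in Lemma \ref{lemma2.4}. Composing with the identity $E_k \to \ell_1$ distorts both bounds by at most the factor $2k$, giving a map into $\ell_1$ with ratio $> K/(2k)$. So it suffices to know that $\mathcal{CR}(L_p, \ell_1) < \infty$ for $p>2$.

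Since $\ell_1$ embeds isometrically into $L_1$, Corollary \ref{cor5.12} with $q=1$ gives $\mathcal{CR}(L_p, L_1) \le 30^{2}$. Hence $\mathcal{CR}(Z, E_k) \le 2k \cdot 900 < \infty$. This needs $Z$ to be infinite-dimensional, which is guaranteed by the standing assumption of Section 5. If $Z$ is finite-dimensional, then $Z$ embeds into $\ell_1$ bi-Lipschitz-ly, and I would need to exclude that case, presumably by that same convention.

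\emph{Main obstacle.} The delicate step is the compression estimate in the lower bound. I must verify that far-apart points land in cells at graph distance at least $k$, and choose the partition so that the graph is locally finite. Local finiteness is not actually needed for the norm, but I must check that $\mathcal{G}$'s adjacency condition $d(A_n,A_k)\le 1$ really matches the expansion scale. A path $i = i_0, \dots, i_d = j$ yields points with consecutive gaps at most $1+\eta$, each step adding a cell diameter $\le 1$. So $\|x-y\| \le (2+\eta)d + 1$, and $\Delta = 3k+1$ suffices.
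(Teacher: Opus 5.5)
Your proposal is correct and follows essentially the same route as the paper: the partition-induced graph norm with $\phi(x)=e_i$ on $A_i$ gives the lower bound, and the $\ell_1$-isomorphism of $E_k$ combined with Corollary~\ref{cor5.12} gives finiteness. Your worry about strictness in the adjacency condition is unnecessary, since $d(A_i,A_j)\le\|x-y\|\le 1$ directly. Your remark that finite-dimensional $Z$ must be excluded by the Section 5 convention is a fair point that the paper leaves implicit.
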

\begin{proof}
    Let $\{A_i\}_{i = 1}^\infty$ be a partition of $Z$ with $\operatorname{diam}(A_i) < 1$ for all $i \in \mathbb{N}$. Define $\mathcal{G} = \mathcal{G}(Z, \{A_i\}_{i = 1}^\infty)$. Set $E_k = (\ell_1, \norm{\,\cdot\,}_{\mathcal{G}, k})$. Define $\phi: Z \to E_k$ for all $x \in Z$ by $\phi(x) = e_i$ when $x \in A_i$. Let $x, y \in Z$ and let $i, j \in \mathbb{N}$ such that $\phi(x) = e_i$ and $\phi(y) = e_j$. Suppose $\norm{x - y} \leq 1$. Then, $d(A_i, A_j) \leq 1$, so $d_\mathcal{G}(i, j) \leq 1$. Thus, we see
    \[
    \norm{\phi(x) - \phi(y)}_{\mathcal{G}, k} = \min\{d_\mathcal{G}(i, j), k\} \leq \min\{1, k\} = 1.
    \]
    Now, suppose $\norm{x - y} \geq 2k + 2$. Let $x' \in A_i$, $y' \in A_j$. Then $\norm{x - x'} \leq 1$ and $\norm{y - y'} \leq 1$. Thus, we see 
    \[
    \norm{x' - y'} \geq \norm{x - y} - \norm{x - x'} - \norm{y - y'} \geq 2k + 2 - 2 = 2k.
    \]
    Taking the infimum over all choices of $x'$ and $y'$, we see $d(A_i, A_j) \geq 2k$. We claim this implies $d_\mathcal{G}(i, j) > k$. Indeed, if $d_\mathcal{G}(i, j) \leq k$, then there is a path $i = i_0, i_1 ..., i_m = j$ with $m \leq k$ and $d_\mathcal{G}(i_{r - 1}, i_r) = 1$ for all $1 \leq r \leq m$. However, for each $1 \leq r \leq m$, this means $d(A_{i_{r - 1}}, A_{i_r}) \leq 1$. Since $\operatorname{diam}(A_{i_r}) \leq 1$ for all $1 \leq r \leq m - 1$, we see
    \[
    d(A_i, A_j) \leq \sum\limits_{r = 1}^m d(A_{i_{r - 1}}, A_{i_r}) + \sum\limits_{r = 1}^{m - 1} \operatorname{diam}(A_{i_r}) \leq 2m - 1 < 2k,
    \]
    a contradiction. Thus, $d_\mathcal{G}(i, j) \geq k$, so
    \[
    \norm{\phi(x) - \phi(y)}_{\mathcal{G}, k} = \min\{d_\mathcal{G}(i, j), k\} = k.
    \]
    Therefore, by Lemma \ref{lemma2.4}, we see $\mathcal{R}(\phi) \geq k$. Thus, $\mathcal{CR}(Z, E_k) \geq k$. Now, note that $E_k$ is isomorphic to $\ell_1$. Thus, if $\mathcal{CR}(Z, E_k) = \infty$, then $\mathcal{CR}(Z, \ell_1) = \infty$. However, by Corollary \ref{cor5.12}, we see $\mathcal{CR}(Z, \ell_1) < \infty$, so $\mathcal{CR}(Z, E_k) < \infty$.
\end{proof}

%% file: 99_declaration.tex
\section*{Declaration of AI Use}
GPT-5.5 Pro was used to generate an initial proof in the case of $Z = \ell_p$, $p > 2$ for Theorem \ref{thm6.6}.
The author rewrote the argument provided by GPT-5.5 Pro into the current form, and extended it to the case $Z = L_p(X, \Sigma, \mu)$, $p > 2$ for a separable measure space $(X, \Sigma, \mu)$.
All other sections (Sections 1 - 5) were developed and written by the author without the use of LLMs.

%% file: refs.bib
@article{rosendal2023separationratiosmapsbanach,
  title={Separation ratios of maps between Banach spaces},
  author={Rosendal, Christian},
  journal={Comptes Rendus. Math{\'e}matique},
  volume={361},
  number={G10},
  pages={1663--1672},
  year={2023}
}

@article{kalton2007coarse,
  title={Coarse and uniform embeddings into reflexive spaces},
  author={Kalton, Nigel J},
  journal={The Quarterly Journal of Mathematics},
  volume={58},
  number={3},
  pages={393--414},
  year={2007},
  publisher={Oxford University Press}
}

@article{Baudier_2018,
   title={The coarse geometry of Tsirelson’s space and applications},
   volume={31},
   ISSN={1088-6834},
   url={http://dx.doi.org/10.1090/jams/899},
   DOI={10.1090/jams/899},
   number={3},
   journal={Journal of the American Mathematical Society},
   publisher={American Mathematical Society (AMS)},
   author={Baudier, Florent and Lancien, Gilles and Schlumprecht, Thomas},
   year={2018},
   month=feb, pages={699–717}
}

@inproceedings{James1972SOMESP,
  title={Some self-dual properties of normed linear spaces},
  author={James, Robert C},
  booktitle={Symposium on infinite dimensional topology},
  pages={159--176},
  year={2016},
  organization={Princeton University Press}
}

@article{mendel2008metric,
  title={Metric cotype},
  author={Mendel, Manor and Naor, Assaf},
  journal={Annals of Mathematics},
  pages={247--298},
  year={2008},
  publisher={JSTOR}
}

@article{mercourakis2013equilateralsetsinfinitedimensional,
 ISSN = {00029939, 10886826},
 URL = {http://www.jstor.org/stable/23807562},
 author = {S. K. Mercourakis and G. Vassiliadis},
 journal = {Proceedings of the American Mathematical Society},
 number = {1},
 pages = {205--212},
 publisher = {American Mathematical Society},
 title = {Equilateral Sets in Infinite Dimensional Banach Spaces},
 urldate = {2026-04-08},
 volume = {142},
 year = {2014}
}

@inproceedings{bretagnolle1966lois,
  title={Lois stables et espaces {$L^p$}},
  author={Bretagnolle, Jean and Dacunha-Castelle, Didier and Krivine, Jean-Louis},
  booktitle={Annales de l'IHP Probabilit{\'e}s et statistiques},
  volume={2},
  pages={231--259},
  year={1966}
}

@article{albiac2016topics,
  title={Topics in Banach Space Theory},
  author={Albiac, Fernando and Kalton, Nigel J},
  journal={Graduate Texts in Mathematics},
  year={2016},
  publisher={Springer},
  edition={2}
}

@article{tsirel1974not,
  title={Not every Banach space contains an imbedding of {$l_p$} or {$c_0$}},
  author={Tsirel'son, Boris S},
  journal={Functional Analysis and Its Applications},
  volume={8},
  number={2},
  pages={138--141},
  year={1974},
  publisher={Springer}
}

@article{figiel1974uniformly,
  title={A uniformly convex Banach space which contains no {$l_p$}},
  author={Figiel, Tadeusz and Johnson, William B},
  journal={Compositio Mathematica},
  volume={29},
  number={2},
  pages={179--190},
  year={1974}
}

@article{kwapien1972isomorphic,
  title={Isomorphic characterizations of inner product spaces by orthogonal series with vector valued coefficients},
  author={Kwapie{\'n}, Stanis{\l}aw},
  journal={Studia mathematica},
  volume={44},
  number={6},
  pages={583--595},
  year={1972}
}

@article{enflo1978infinite,
  title={On infinite-dimensional topological groups},
  author={Enflo, Per},
  journal={S{\'e}minaire Maurey-Schwartz},
  pages={1--11},
  year={1978}
}

@article{ivanisvili2020rademacher,
  title={Rademacher type and Enflo type coincide},
  author={Ivanisvili, Paata and Van Handel, Ramon and Volberg, Alexander},
  journal={Annals of mathematics},
  volume={192},
  number={2},
  pages={665--678},
  year={2020},
  publisher={Department of Mathematics, Princeton University Princeton, New Jersey, USA}
}

@article{pisier1982holomorphic,
  title={Holomorphic semi-groups and the geometry of Banach spaces},
  author={Pisier, Gilles},
  journal={Annals of Mathematics},
  volume={115},
  number={2},
  pages={375--392},
  year={1982},
  publisher={JSTOR}
}

@article{mendel2004euclidean,
  title={Euclidean quotients of finite metric spaces},
  author={Mendel, Manor and Naor, Assaf},
  journal={Advances in Mathematics},
  volume={189},
  number={2},
  pages={451--494},
  year={2004},
  publisher={Elsevier}
}

@book{wojtaszczyk1996banach,
  title={Banach spaces for analysts},
  author={Wojtaszczyk, Przemyslaw},
  number={25},
  year={1996},
  publisher={Cambridge University Press}
}

@article{mazur1932transformationes,
  title={Sur les transformationes isom{\'e}triques d’espaces vectoriels norm{\'e}s},
  author={Mazur, Stanis{\l}aw},
  journal={CR Acad. Sci. Paris},
  volume={194},
  pages={946},
  year={1932}
}

@article{kadets1967proof,
  title={Proof of the topological equivalence of all separable infinite-dimensional Banach spaces},
  author={Kadets, Mikhail Iosifovich},
  journal={Functional Analysis and Its applications},
  volume={1},
  number={1},
  pages={53--62},
  year={1967},
  publisher={Kluwer Academic Publishers-Plenum Publishers New York}
}

@article{torunczyk1981characterizing,
  title={Characterizing Hilbert space topology},
  author={Toru{\'n}czyk, Henryk},
  journal={Fundamenta Mathematicae},
  volume={111},
  number={3},
  pages={247--262},
  year={1981}
}

@article{ribe1978uniformly,
  title={On uniformly homeomorphic normed spaces II},
  author={Ribe, Martin},
  journal={Arkiv f{\"o}r Matematik},
  volume={16},
  number={1},
  pages={1--9},
  year={1978},
  publisher={Kluwer Academic Publishers Dordrecht}
}

@book{gromov2007metric,
  title={Metric structures for Riemannian and non-Riemannian spaces},
  author={Gromov, Mikhail},
  year={2007},
  publisher={Springer}
}

@article{kalton2012uniform,
  title={The uniform structure of Banach spaces},
  author={Kalton, Nigel J},
  journal={Mathematische Annalen},
  volume={354},
  number={4},
  pages={1247--1288},
  year={2012},
  publisher={Springer}
}

@inproceedings{rosendal2017equivariant,
  title={Equivariant geometry of Banach spaces and topological groups},
  author={Rosendal, Christian},
  booktitle={Forum of Mathematics, Sigma},
  volume={5},
  pages={e22},
  year={2017},
  organization={Cambridge University Press}
}

@article{randrianarivony2006characterization,
  title={Characterization of quasi-Banach spaces which coarsely embed into a Hilbert space},
  author={Randrianarivony, N.},
  journal={Proceedings of the American Mathematical Society},
  volume={134},
  number={5},
  pages={1315--1317},
  year={2006}
}

@article{naor2015uniform,
  title={Uniform nonextendability from nets},
  author={Naor, Assaf},
  journal={Comptes Rendus. Math{\'e}matique},
  volume={353},
  number={11},
  pages={991--994},
  year={2015}
}

@inproceedings{schreiber1972quelques,
  title={Quelques remarques sur les caract{\'e}risations des espaces {$L^p, 0 \leq p < 1$}},
  author={Schreiber, Michel},
  booktitle={Annales de l'institut Henri Poincar{\'e}. Section B. Calcul des probabilit{\'e}s et statistiques},
  volume={8},
  number={1},
  pages={83--92},
  year={1972}
}

@article{eskenazis2019nonpositive,
  title={Nonpositive curvature is not coarsely universal},
  author={Eskenazis, Alexandros and Mendel, Manor and Naor, Assaf},
  journal={Inventiones mathematicae},
  volume={217},
  number={3},
  pages={833--886},
  year={2019},
  publisher={Springer}
}
